\def\thm@space@setup{\thm@preskip=0pt
\thm@postskip=0pt}
\renewcommand{\vec}[1]{{\mathchoice
                     {\mbox{\boldmath$\displaystyle{#1}$}}
                     {\mbox{\boldmath$\textstyle{#1}$}}
                     {\mbox{\boldmath$\scriptstyle{#1}$}}
                     {\mbox{\boldmath$\scriptscriptstyle{#1}$}}}}
\newcommand{\eps}{\varepsilon}
\newcommand{\design}{\vec{\xi}}
\newcommand{\LL}{\mathscr{L}}
\newcommand{\Lt}{\mathscr{L}_1}
\newcommand{\Ltp}{\mathscr{L}_1^{\text{sym}+}}
\newcommand{\mat}[1]{\mathbf{{#1}}}
\newcommand{\HMd}{\mat{H}_\text{m}}
\newcommand{\HM}{\mathcal{H}_\text{m}}
\newcommand{\HMt}{\tilde{\mathcal{H}}_\text{m}}
\newcommand{\HMtd}{\tilde{\mat{H}}_\text{m}}
\newcommand{\like}{\pi_\text{like}}
\newcommand{\ZZ}{\mathcal{Z}}
\newcommand{\Zlike}{\ZZ_\text{like}}
\renewcommand{\S}{\mathcal{S}}
\newcommand{\C}{\mathcal{C}}
\newcommand{\A}{\mathcal{A}}
\newcommand{\Q}{\mathcal{Q}}
\newcommand{\R}{\mathbb{R}}
\newcommand{\J}{\mathcal{J}}
\newcommand{\B}{\mathfrak{B}}
\newcommand{\Y}{\mathscr{Y}}
\newcommand{\hilb}{\mathscr{H}}
\newcommand{\trace}{\mathrm{tr}}
\newcommand{\CM}{\hilb_{\priorm}}
\newcommand{\eip}[2]{\displaystyle\left\langle{#1},{#2}\right\rangle_{\R^n}}
\newcommand{\eipq}[2]{\displaystyle\left\langle{#1},{#2}\right\rangle_{\R^q}}
\newcommand{\ip}[2]{\displaystyle\left\langle{#1},{#2}\right\rangle_{\!\hilb}}
\newcommand{\cip}[2]{\displaystyle\left\langle{#1},{#2}\right\rangle_{\!\priorcov^{-1}}}
\newcommand{\cipfd}[2]{\displaystyle\left\langle{#1},{#2}\right\rangle_{\!\mat{C}_\text{pr}^{-1}}}
\newcommand{\avemu}[2]{\mathsf{E}_{#1}\left\{ {#2} \right\}}
\newcommand{\norm}[1]{\left\| {#1} \right\|_\hilb}
\renewcommand{\span}{\mathsf{Span}}
\newcommand{\ran}{\mathsf{range}}
\newcommand{\GM}[2]{\mathcal{N}\!\left({#1},{#2}\right)}
\newcommand{\ipar}{u}
\newcommand{\priorm}{\mu_\text{pr}}
\newcommand{\priorcov}{\C_\text{pr}}
\newcommand{\obs}{\vec{y}}
\newcommand{\postm}{\mu_\text{post}^\obs}
\newcommand{\priormean}{\ipar_\text{pr}}
\newcommand{\postmw}{\mu_\text{post}^{\obs,\design}}
\newcommand{\postcov}{\C_\text{post}}
\newcommand{\postmean}{\ipar_\text{post}^\obs}
\newcommand{\fop}{\mathcal{G}}
\newcommand{\dpar}{\vec{u}}
\newcommand{\dpriorcov}{\mat{C}_\text{pr}}
\newcommand{\dpostcov}{\mat{C}_\text{post}}
\newcommand{\dpostmean}{\dpar_\text{post}^\obs}
\newcommand{\dpriorm}{\mu_\text{pr,n}}
\newcommand{\dpostm}{\mu_\text{post,n}^\obs}
\newcommand{\ncov}{\mat{\Gamma}_\text{noise}}
\newcommand{\DKLtext}[2]{D_\text{kl}({#1} \| {#2})}
\newcommand{\DKL}[2]{D_\text{kl}\left({#1} \| {#2}\right)}
\newcommand{\MSE}{\mathrm{MSE}}
\newcommand{\avey}[1]{\mathsf{E}_{{\obs|\ipar}}\left\{ {#1} \right\}}
\newcommand{\prhalf}{\priorcov^{1/2}}
\newcommand{\Cpv}{\priorcov^{1/2}\S\priorcov^{1/2}}
\newcommand{\dblexp}[1]{\avemu{\priorm}{\avemu{\obs | \ipar}{ {#1}}}}
\newcommand{\Bop}{\mathcal{B}}
\newtheorem{theorem}{Theorem}[]
\newtheorem{proposition}{Proposition}
\newtheorem{lemma}{Lemma}
\theoremstyle{definition}
\theoremstyle{remark}
\newtheorem{remark}{Remark}
\author{Alen Alexanderian}
\address{Institute for Computational Engineering and Sciences, The University of Texas at Austin}
\email{alen@ices.utexas.edu}
\author{Philip Gloor}
\address{Mathematics Department, United States Naval Academy}
\email{gloor@usna.edu}
\author{Omar Ghattas}
\address{Institute for Computational Engineering and Sciences, The University of Texas at Austin}
\email{omar@ices.utexas.edu}
\subjclass[2010]{62K05;62F15;46N30;49N45}
\keywords{Bayesian inference in Hilbert space; Gaussian measures; Kullback Leibler divergence; Bayesian optimal
experimental design; Bayes risk}
\date{\today}
\title[Bayesian A- and D-optimality in infinite dimensions]{On Bayesian A- and D-optimal experimental designs in infinite dimensions}
\keywords{Bayesian inference in Hilbert space; Gaussian measure; Kullback Leibler divergence; Bayesian optimal
experimental design; expected information gain; Bayes risk}
\begin{document}

\begin{abstract}
We consider Bayesian linear inverse problems in
infinite-dimensional separable Hilbert spaces, with a Gaussian
prior measure and additive Gaussian noise model,
and provide an extension of the concept of Bayesian D-optimality
to the infinite-dimensional case. To this end,
we derive the infinite-dimensional version of the expression for
the Kullback-Leibler divergence from the posterior measure
to the prior measure, which is subsequently used to
derive the expression for the expected information gain. We
also study the notion of Bayesian A-optimality in the
infinite-dimensional setting, and extend the well known (in the finite-dimensional
case) equivalence of the Bayes risk of the MAP estimator with the trace of the
posterior covariance, for the Gaussian linear case, to the infinite-dimensional Hilbert space case.
\end{abstract}

\maketitle

\section{Introduction}\label{sec:intro}
In a Bayesian inference problem one uses experimental (observed) data to update
the prior state of knowledge about a parameter which   
often specifies certain properties of a mathematical model. 
The ingredients of a Bayesian inference problem include the prior measure which
encodes our prior knowledge about the inference parameter, experimental data, and the data 
likelihood which describes the conditional distribution of the experimental data for a given model parameter.  
The solution of a Bayesian inference problem is 
a posterior probability law for the inference parameter. The quality of this solution, which 
can be measured using different criteria, depends to a large extent on the experimental data used
in solving the inference problem. In practice, acquisition of such experimental data is often costly, 
as it requires deployment of scarce resources. Hence, the problem of optimal collection
of experimental data, i.e.~that of optimal experimental design (OED)~\cite{AtkinsonDonev92,Ucinski05,Pukelsheim06}, is an integral 
part of modeling under uncertainty. 
The basic problem of OED is to optimize a function of the experimental 
setup which describes, in a certain sense which needs to be specified, the statistical quality of the 
solution to the Bayesian inference problem. 
Note that what constitutes an experimental design depends on the application at hand. 
For example, in a problem involving diffusive transport of a contaminant, one may 
use measurements of concentration at sensor sites in the physical domain (at a certain point in time) 
to \emph{infer} where the contaminant originated, i.e. the initial state of the concentration field.
In this problem, an experimental design specifies the locations of the sensors in the
physical domain. Note also that the inference parameter in this example, i.e.~the initial concentration field, 
is a random function (random field) whose realizations belong to an appropriate function space.

We consider the problem of design of experiments for inference problems  
whose inference parameter belongs to an infinite-dimensional separable Hilbert space. 
This is motivated by the recent interest in the Bayesian framework for inverse 
problems~\cite{Stuart10}.
A \emph{Bayesian inverse problem} involves inference of Hilbert space valued
parameters that describe physical properties of mathematical models which are often
governed by partial differential equations. Study of such problems requires a synthesis of 
ideas from inverse problem theory, PDE-constrained optimization, functional analysis, 
and probability and statistics and has provided a host of interesting 
mathematical problems with a wide range of applications. 
The problem of design of experiments in this 
infinite-dimensional setting involves optimizing functionals of experimental designs 
which are defined in terms of operators on Hilbert spaces. 

The precise definition of what is meant by an \emph{optimal} design 
leads to the choice of a design criterion.
A popular experimental design criterion, in the finite-dimensional case, is  
that of D-optimality 
which seeks to minimize the determinant of the posterior covariance operator.
The geometric intuition behind D-optimality is that of minimizing the \emph{volume} of the uncertainty ellipsoid. 
Minimizing this determinant, however, is not meaningful in infinite dimensions, 
as the posterior covariance operator is a trace-class linear operator 
with eigenvalues that accumulate at zero. In the present work, we provide an extension 
of the concept of D-optimal design to the infinite-dimensional Hilbert space setting. 
In particular, we focus on the case of Bayesian linear inverse problems whose
parameter space is an infinite-dimensional separable Hilbert space which we denote by $\hilb$, 
and we assume a Gaussian prior measure, and an additive Gaussian noise model. 
To study the concept of D-optimality in the infinite-dimensional setting
we formulate the problem as that of maximizing the expected
information gain, measured by the Kullback-Leibler (KL) divergence~\cite{Kullback1951} 
from posterior to prior. To be precise, if $\priorm$ denotes the prior measure, 
$\obs$ is a vector of experimental data obtained using an experimental design specified by 
a vector of design parameters $\design$, 
and $\postmw$ denotes the resulting posterior measure, the KL divergence from posterior to 
prior is given by, 
\[
    \DKL{\postmw}{\priorm} := \int_\hilb \log\left\{ \frac{d\postmw}{d\priorm} \right\}\, d\postmw.
\]
(The argument of the logarithm in the above formula is the Radon-Nikodym derivative of the posterior 
measure with respect to the prior measure.) 
The experimental design criterion is then defined by averaging $\DKLtext{\postmw}{\priorm}$ over all possible
experimental data.
In a Bayesian inverse problem, this averaging over experimental data can be done as follows: 
\[
    \text{expected information gain} :=  \int_\hilb \int_\Y \DKL{\postmw}{\priorm}\, \like(\obs | \ipar; \design)d\obs \, \priorm(d\ipar),
\]
where $\design$ is a fixed design vector, $\Y$ denotes the space of experimental data and $\like(\obs | \ipar; \design)$ is the data likelihood which 
specifies the distribution of $\obs$ for a given $\ipar \in \hilb$.

It is known in the finite-dimensional Gaussian linear case (i.e., an inference problem
with Gaussian prior and noise distributions) that 
maximizing this expected information gain is equivalent to minimizing 
the determinant of the posterior covariance operator, i.e., the usual D-optimal design problem. 
While this does not directly extend to 
the infinite-dimensional case, it suggests a mathematically rigorous path to an 
infinite-dimensional analogue of Bayesian D-optimality. 
In the present work, we derive analytic expressions for the KL divergence from posterior to prior in a Hilbert space.
This enables deriving the expression for the expected information gain, leading
to the infinite-dimensional version of the Bayesian D-optimal experimental design criterion. 

We also discuss another popular experimental design criterion, that of A-optimality, in 
the infinite-dimensional setting. An A-optimal design is one that 
minimizes the trace of the posterior covariance operator; i.e., if 
$\postcov(\design):\hilb \to \hilb$ denotes the posterior covariance operator 
corresponding to an experimental design $\design$, we seek to minimize $\trace(\postcov(\design))$.
In the statistics literature it is known (see e.g.,~\cite{ChalonerVerdinelli95}) that 
for a Gaussian linear inference problem in $\hilb = \R^n$, minimizing the 
trace of the posterior covariance \emph{matrix} is equivalent to 
minimizing the average mean square error of the maximum a 
posteriori probability (MAP) estimator for the inference parameter. 
We provide an extension of this result to the infinite-dimensional Hilbert space setting, where 
we show that the trace of the posterior covariance \emph{operator}---a positive, self-adjoint, and trace-class 
operator on $\hilb$---coincides with the average mean square error of the MAP estimator.  

Note that the design vector $\design$ enters the Bayesian inverse problem through the data likelihood. 
The exact nature of this dependence on $\design$ is not essential to our discussion and hence, to keep the
notation simple, we suppress the dependence to $\design$ in our 
derivations. (See e.g.,~\cite{ChalonerVerdinelli95} for a 
an overview of how an experimental design is incorporated in an inference problem 
in classical formulations.)

\section{Background concepts}\label{sec:background}
In this section, we outline the background concepts that are needed 
in the rest of this article. 
In what follows, $\hilb$ denotes an infinite-dimensional separable
real Hilbert space, with inner-product $\ip{\cdot}{\cdot}$ and induced
norm $\norm{\cdot} = \ip{\cdot}{\cdot}^{1/2}$.

\subsection{Trace-class operators on $\hilb$}\label{sec:LinearOperators}
Let $\LL(\hilb)$ denote the set of bounded linear operators on $\hilb$.
We say $\A \in \LL(\hilb)$ is positive if $\ip{x}{\A x} \geq 0$ for all
$x \in \hilb$, and is strictly positive if $\ip{x}{\A x} > 0$ for all non-zero $x \in \hilb$.
For $\A \in \LL(\hilb)$, $|\A| = (\A^*\A)^{1/2}$, where $\A^*$ denotes the
adjoint of $\A$. We say $\A$ is of \emph{trace-class} if for any orthonormal basis $\{ f_j\}_{j=1}^\infty$ of
$\hilb$,
\[
\sum_{j = 1}^\infty \ip{|\A|f_j}{f_j} < \infty.
\]
It is straightforward to show that the value of the above summation is invariant with respect to the choice
of the orthonormal basis~\cite{ReedSimon}. We denote by
$\Lt(\hilb)$ the subspace of $\LL(\hilb)$ consisting of trace-class operators.
For $\A \in \Lt(\hilb)$,
\[
    \trace(\A) = \sum_{j = 1}^\infty \ip{\A f_j}{f_j},
\]
where the sum is finite and its value is independent of the choice of the
orthonormal basis~\cite{Conway,ReedSimon}. 

Let $\Ltp(\hilb)$ be the subspace of positive self-adjoint operators in $\Lt(\hilb)$,
and note that for $\A \in \Ltp(\hilb)$, there exists an
orthonormal basis of eigenvectors, $\{e_j\}$, with
corresponding (real, non-negative) eigenvalues, $\{\lambda_j\}$, and 
$\trace(A) = {\sum_{j = 1}^\infty \ip{\A e_j}{e_j}} = \sum_{j = 1}^\infty
   \lambda_j$.

In what follows we shall make repeated use of the following result: if $\A \in \Lt(\hilb)$
and $\Bop \in \LL(\hilb)$ then $\A \Bop$ and $\Bop \A$ both belong to $\Lt(\hilb)$ and
$\trace(\A \Bop) = \trace(\Bop \A)$; see e.g.,~\cite{ReedSimon} for details. 
Moreover, it is straightforward to show that if 
$\A$ is a trace-class operator and $\Bop:\hilb \to \R^q$ is a bounded linear operator, 
then $\A\Bop^*\Bop \in \Lt(\hilb)$ and $\trace(\A\Bop^*\Bop) = \trace(\Bop\A\Bop^*)$.

\subsection{Borel probability measures on $\hilb$}\label{sec:BorelMeas}
We work with probability measures on the measurable space $\big(\hilb, \B(\hilb)\big)$, 
where $\B(\hilb)$ denotes the Borel sigma-algebra on $\hilb$; we refer to such 
measures as Borel probability measures. 
Let $\mu$ be a Borel probability measure on $\hilb$, which has bounded first
and second moments. The mean $m \in \hilb$ and covariance operator $\Q \in \LL(\hilb)$
of $\mu$ are characterized as follows:
\[
   \ip{m}{x} = \int_\hilb \ip{z}{x} \, \mu(dz), \qquad
    \ip{\Q x}{y} = \int_\hilb \ip{x}{z - m}\ip{y}{z - m}\, \mu(dz),
\]
for all $x, y \in \hilb$.
It is straightforward to show (see e.g.,~\cite{DaPrato}) that
$\Q$ belongs to $\Ltp(\hilb)$, and that
\begin{equation}\label{equ:second_moment}
    \int_\hilb \norm{x}^2 \, \mu(dx) = \trace(\Q) + \norm{m}^2.
\end{equation}

\subsection{Gaussian measures on $\hilb$}\label{sec:GaussianMeas}
In the present work, we shall be working with Gaussian measures on
Hilbert spaces~\cite{DaPrato}; $\mu$ is a Gaussian measure
on $(\hilb, \B(\hilb))$ if for every $x \in \hilb$ the linear functional
$\ip{x}{\cdot}$, considered as a random variable from $(\hilb, \B(\hilb), \mu)$ to $(\R, \B(\R))$,
is a (one-dimensional)  Gaussian random variable. We refer the
reader to~\cite{DaPrato} or \cite{DaPratoZabczyk} for the theory of Gaussian measures
on Hilbert spaces. We denote a Gaussian measure with mean 
$m \in \hilb$ and $\Q \in \Ltp(\hilb)$ by $\GM{m}{\Q}$. If $\Q$ satisfies 
$\ker(\Q) = \{ 0 \}$, where $\ker(\Q)$ denotes the null space of $\Q$, we say that $\GM{m}{\Q}$ is
a non-degenerate Gaussian measure.  

In what follows, we shall use the following result, concerning the law
of an affine transformation on $\hilb$: If $\mu = \GM{m}{\Q}$, a Gaussian measure, $\A \in \LL(\hilb)$, and $b \in \hilb$, then $Tx = \A x+b$ is a random
variable on $\hilb$ whose law is given by
$\mu_T = \mu \circ T^{-1} = \GM{\A m + b}{\A\Q\A^*}$~\cite{DaPrato}.
Thus, in particular, we note that,
\[
   \int_\hilb \norm{T x}^2 \, \mu(dx) = \int_\hilb \norm{\xi}^2 \, \mu_T(d\xi)
   = \trace(\A \Q \A^*) + \norm{\A m + b}^2,
\]
where the last equality uses~\eqref{equ:second_moment}.
It follows that if $\A \in \LL(\hilb)$ is positive self-adjoint compact operator, and 
$\mu = \GM{m}{\Q}$ is a Gaussian measure, then

\begin{eqnarray}
   \int_{\hilb} \ip{\A x}{x} \, \mu(dx) &=& \int_{\hilb} \norm{\A^{1/2} x}^2 \, \mu(dx)  \nonumber\\ 
   &=& \trace(\A^{1/2} \Q \A^{1/2}) + \ip{\A^{1/2} m}{\A^{1/2}m} \\
   &=& \trace(\A\Q) + \ip{\A m}{m}.\nonumber
\end{eqnarray}
This shows that the well-known expression for the expectation of a quadratic form on $\R^n$ 
extends to the infinite-dimensional Hilbert space setting.
It can be shown that, as in the finite-dimensional case, 
this result holds not just for Gaussian measures, but also for any Borel 
probability measure with mean $m$ and covariance operator $\Q$; moreover, 
the the only requirement on the operator $\A$ is boundedness. That is, we have the 
following result:
\begin{lemma}\label{lem:quadforms}
Let $\mu$ be a Borel probability measure on $\hilb$
with mean $m \in \hilb$ and covariance operator $\Q \in \Ltp(\hilb)$,
and let $\A \in \LL(\hilb)$. Then,
\[
   \int_{\hilb} \ip{\A x}{x} \, \mu(dx) = \trace(\A\Q) + \ip{\A m}{m}.
\]
\end{lemma}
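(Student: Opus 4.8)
The plan is as follows. The first thing I would check is that the integral on the left-hand side is absolutely convergent, hence well defined: since $\A$ is bounded, $|\ip{\A x}{x}| \le \norm{\A x}\,\norm{x} \le \|\A\|\,\norm{x}^{2}$, where $\|\A\|$ denotes the operator norm of $\A$, and $\int_\hilb \norm{x}^{2}\,\mu(dx) = \trace(\Q) + \norm{m}^{2} < \infty$ by~\eqref{equ:second_moment}. In particular, every integral appearing below is finite, and the product $\ip{u}{x}\ip{v}{x}$ is $\mu$-integrable for all $u,v\in\hilb$.

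Next, fix an arbitrary orthonormal basis $\{e_j\}_{j=1}^\infty$ of $\hilb$. For each $x\in\hilb$, writing $\A x = \sum_{j}\ip{\A x}{e_j}e_j$ and using continuity of the functional $\ip{\A x}{\cdot}$ gives the pointwise identity
\[
   \ip{\A x}{x} = \sum_{j=1}^\infty \ip{\A x}{e_j}\ip{e_j}{x} = \sum_{j=1}^\infty \ip{x}{\A^* e_j}\ip{x}{e_j}.
\]
The heart of the argument is to integrate this series term by term against $\mu$. I would justify the interchange by dominated convergence: applying the Cauchy--Schwarz inequality to the finite partial sums and then Parseval's identity (once for the orthonormal basis $\{e_j\}$ and once using $\sum_j \ip{x}{\A^* e_j}^{2} = \sum_j \ip{\A x}{e_j}^{2} = \norm{\A x}^{2}$) yields
\[
   \left| \sum_{j=1}^{N} \ip{x}{\A^{*} e_{j}}\ip{x}{e_{j}} \right| \le \norm{\A x}\,\norm{x} \le \|\A\|\,\norm{x}^{2}
   \qquad\text{for all } N,
\]
and the right-hand side is $\mu$-integrable by the first step. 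As this dominating bound is independent of $N$, term-by-term integration is legitimate, giving $\int_\hilb \ip{\A x}{x}\,\mu(dx) = \sum_{j=1}^\infty \int_\hilb \ip{x}{\A^* e_j}\ip{x}{e_j}\,\mu(dx)$.

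It then remains to evaluate and sum the individual terms. From the defining relations for $m$ and $\Q$ (together with the integrability noted above, which permits expanding the product) one obtains, for all $u,v\in\hilb$, the identity $\int_\hilb \ip{u}{x}\ip{v}{x}\,\mu(dx) = \ip{\Q u}{v} + \ip{u}{m}\ip{v}{m}$. Taking $u = \A^* e_j$ and $v = e_j$ and using that $\Q$ is self-adjoint, the $j$-th term equals $\ip{\A\Q e_j}{e_j} + \ip{\A m}{e_j}\ip{m}{e_j}$. Summing over $j$: the first contributions sum to $\trace(\A\Q)$ — here I use that $\A\Q\in\Lt(\hilb)$, since $\Q\in\Lt(\hilb)$ and $\A\in\LL(\hilb)$, so that $\trace(\A\Q)=\sum_j\ip{\A\Q e_j}{e_j}$ in the chosen basis — while the second contributions sum to $\ip{\A m}{m}$ via the basis expansion of $\A m$. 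This gives the claimed formula.

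The step I expect to require the most care is the interchange of summation and integration, i.e., exhibiting a single $N$-independent, $\mu$-integrable dominating function for the partial sums; the Cauchy--Schwarz/Parseval estimate above, combined with~\eqref{equ:second_moment}, is exactly what supplies it. It is worth noting that boundedness of $\A$ — rather than self-adjointness or compactness, as in the Gaussian computation preceding the lemma — is all that enters, and it enters precisely through the estimate $\norm{\A x}\le\|\A\|\,\norm{x}$.
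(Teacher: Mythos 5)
Your proof is correct. It shares the paper's basic strategy---truncate an orthonormal expansion, integrate term by term, and pass to the limit by dominated convergence with the dominating function $\|\A\|\norm{x}^2$---but differs in two genuine ways. The paper first centers, writing $x=(x-m)+m$ and reducing the claim to $\int_\hilb\ip{\A(x-m)}{x-m}\,\mu(dx)=\trace(\A\Q)$, and then must work in the eigenbasis of $\Q$: its truncation $\ip{\A\Pi_n(x-m)}{\Pi_n(x-m)}$ yields the double sum $\sum_{i,j}\ip{\A e_i}{e_j}\ip{\Q e_i}{e_j}$, which collapses to $\sum_i\lambda_i\ip{\A e_i}{e_i}$ only when the $e_i$ diagonalize $\Q$. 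You instead project just one slot, writing $\ip{\A x}{x}=\sum_j\ip{x}{\A^*e_j}\ip{x}{e_j}$, and evaluate each term from the identity $\int_\hilb\ip{u}{x}\ip{v}{x}\,\mu(dx)=\ip{\Q u}{v}+\ip{u}{m}\ip{v}{m}$ with $u=\A^*e_j$, $v=e_j$; an arbitrary orthonormal basis then suffices, with the basis-independence of $\sum_j\ip{\A\Q e_j}{e_j}$ for the trace-class operator $\A\Q$ doing the work that the spectral theorem does in the paper, and the mean term handled by Parseval inside the same sum rather than by a separate decomposition of the integrand. Your Cauchy--Schwarz/Parseval domination of the partial sums is exactly the right justification and is the analogue of the paper's bound $|\ip{\A\Pi_n(x-m)}{\Pi_n(x-m)}|\le\|\A\|\norm{x-m}^2$. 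What your route buys is a slightly more economical argument that avoids both the centering step and the appeal to the spectral decomposition of $\Q$; what the paper's route buys is a finite-dimensional computation that is literally diagonal.
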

\begin{proof}
See Appendix~\ref{appdx:quadforms}.\qedhere
\end{proof}
\noindent

\subsection{Kullback-Leibler divergence}\label{sec:KLDiv}
In probability theory the Kullback-Leibler (KL) divergence, also referred to as the relative entropy, is a 
measure of ``distance'' between two probability measures. This notion was defined
in~\cite{Kullback1951}. While KL divergence is not a metric---it is non-symmetric 
and does not satisfy the triangle inequality---it is used commonly in 
probability theory to describe the distance of a measure $\mu$ from a reference measure $\mu_0$. 
Also, KL divergence does satisfy some of the intuitive notions of distance; i.e. the KL
divergence from $\mu$ to $\mu_0$ is non-negative and is zero if and only if the two measures are the same. 
Consider $\mu$ and $\mu_0$ be two Borel probability measures and suppose $\mu$ is absolutely continuous
with respect to $\mu_0$. The KL divergence from $\mu$ to $\mu_0$, denoted by
$\DKL{\mu}{\mu_0}$, is defined as
\[
    \DKL{\mu}{\mu_0} = \int_\hilb \log \Big\{\frac{d \mu}{d \mu_0}\Big\} \, d\mu,
\]
where $\frac{d\mu}{d\mu_0}$ is the Radon-Nikodym derivative of $\mu$ with respect to $\mu_0$.
In the case $\mu$ is not absolutely continuous with respect to $\mu_0$ the KL divergence is $+\infty$. 
Notice that for Borel probability measures on $\R^n$ that admit densities with 
respect to the Lebesgue measure, we may rewrite the definition of the KL 
divergence in terms the densities; that is, if $p$ and $p_0$ are Lebesgue densities, i.e., probability density functions (pdfs), 
of $\mu$ and $\mu_0$ respectively, one has $\DKL{\mu}{\mu_0} = \int_{\R^n} \log\big(p(\vec{x})/p_0(\vec{x})\big)\, p(\vec{x}) \, d\vec{x}$.
However, in an infinite-dimensional Hilbert space, where there is no Lebesgue measure, we are forced to work
with the abstract definition of KL divergence presented above. 

In this paper, we will be dealing with (non-degenerate) Gaussian measures on infinite-dimensional Hilbert spaces. 
For Gaussian measures on $\R^n$, one can use the expression for the (multivariate) 
Gaussian pdfs to derive the well-known analytic expression for the KL divergence between Gaussians. In the 
infinite-dimensional Hilbert space setting, not only do we not have access to pdfs, but 
given two Gaussian measures they are not necessarily equivalent.\footnote{Recall that two measures are called 
equivalent if they are mutually absolutely continuous with respect to each other.} 
In fact, given a centered Gaussian measure $\mu$ on $\hilb$, shifting the mean gives, $\mu$-almost surely, a 
Gaussian measure which is singular with respect to $\mu$; see e.g.,~\cite[Chapter 2]{DaPrato}. 
However, 
In the present work, we work with a special case, namely that of 
a Bayesian linear inverse problem on $\hilb$ with a Gaussian prior and
an additive Gaussian noise model; in this case the posterior measure 
is also Gaussian and is equivalent to the prior~\cite{Stuart10}, and thus, $\DKL{\postm}{\priorm}$ is well-defined.
Later in the paper, we will derive the expression for the KL divergence 
from posterior to prior in an infinite-dimensional Hilbert space, which we shall use to derive the expression for 
the expected information gain. 

\section{Bayesian linear inverse problems in a Hilbert space}\label{sec:BayesInverse}
We consider the problem of inference of a parameter $\ipar$ which belongs to an infinite-dimensional Hilbert
space $\hilb$. 
All our prior knowledge regarding the parameter $\ipar$
is encoded in a Borel probably measure on $\hilb$, which we 
refer to as the prior measure and denote by $\priorm$;
here we assume that $\priorm$ is a Gaussian measure 
$\priorm = \GM{\priormean}{\priorcov}$. Moreover, in what follows, 
we assume that $\ker(\priorcov) = \{0\}$, i.e., $\priorm$ is non-degenerate.
The inference problem uses experimental data $\obs \in \Y$ to update the prior state of
knowledge on the law of the parameter $\ipar$. 
Here $\Y$ is the space of the experimental data, which in the present work is $\Y = \R^q$. 
We assume that $\ipar$ is a model parameter 
which is related to experimental data $\obs \in \Y$ 
according to the following noise model,
\begin{equation}\label{equ:noise}
    \obs = \fop \ipar + \vec{\eta}. 
\end{equation}
The operator $\fop:\hilb \to \Y$ is the \emph{parameter-to-observable map} and 
is assumed to be a continuous linear mapping.  In practice, for a given $\ipar$, computing $\fop \ipar$ would involve the 
evaluation of a mathematical model with the parameter value $\ipar$ 
followed by the application of a restriction operator to extract data at pre-specified locations
in space and/or time. 
The discrepancy between the model output $\fop u$ and experimental data $\obs$ is modeled by  
$\vec{\eta}$ which is a random vector that accounts for experimental noise, i.e. noise associated
with the process of collecting experimental data. We assume $\vec{\eta} \sim \GM{\vec{0}}{\ncov}$, and 
thus, the distribution of $\obs | \ipar$ is Gaussian, $\obs | \ipar \sim \GM{\fop \ipar}{\ncov}$ with pdf 
\[
   \like(\obs | \ipar) = \frac{1}{\Zlike} \exp \Big\{ -\frac12 (\fop\ipar -
  \obs)^T \ncov^{-1} (\fop\ipar - \obs) \Big\},
\]
where $\Zlike = (2\pi)^{q/2}\det(\ncov)^{1/2}$.
In what follows, we denote
\begin{equation}\label{equ:phi}
\Phi(\ipar; \obs) = \frac12 (\fop\ipar - \obs)^T \ncov^{-1} (\fop\ipar - \obs). 
\end{equation}

\subsection{The Bayes formula and the posterior measure}
The solution of the Bayesian inverse problem is the posterior measure, describing the law of the parameter $\ipar$,
conditioned on the experimental data $\obs$, and is linked to the prior measure $\priorm$ through
the infinite-dimensional version of Bayes Theorem~\cite{Stuart10}:
\begin{equation}\label{equ:bayes}
   \frac{d\postm}{d\priorm} = \frac{1}{\ZZ(\obs)}\like(\obs|\ipar),
\end{equation}
where $\ZZ(\obs)$ is the normalization constant. Notice that we can rewrite Bayes Theorem as,
\begin{equation}\label{equ:bayes-phi}
   \frac{d\postm}{d\priorm} = \frac{1}{\ZZ_0(\obs)} \exp\{-\Phi(u; \obs)\},
\end{equation}
with $\ZZ_0(\obs) = 
\int_\hilb \exp\{-\Phi(u; \obs)\}\, \priorm(d\ipar)$. In the Gaussian linear case, 
it is possible to evaluate $\ZZ_0(\obs)$ analytically; see Lemma~\ref{lem:Zconst} below. 

As discussed above, we consider Bayesian linear inverse problems; i.e.,  
Bayesian inverse problems involving a linear parameter-to-observable map $\fop$.
It is well known~\cite{Stuart10} that for a Gaussian linear inverse problem, as specified above,
the solution is a Gaussian posterior measure $\postm = \GM{\postmean}{\postcov}$ with,
\[
\postcov = (\fop^* \ncov^{-1} \fop + \priorcov^{-1})^{-1},
\qquad
\postmean = \postcov(\fop^* \ncov^{-1} \obs + \priorcov^{-1}\priormean).
\]
In practice, the noise covariance matrix, $\ncov$ is often a multiple of the identity, $\ncov = \sigma^2 \mat{I}$,
where $\sigma$ is the experimental noise level. 
In the derivations that follow, since there is no loss of generality, we take $\sigma = 1$. 
Generalizing the results to the cases where $\ncov$ is an anisotropic diagonal matrix (uncorrelated observations with  
varying experimental noise levels) or more generally $\ncov$ that is symmetric and positive 
definite with nonzero off diagonal entries (correlated observations) is straightforward. 
Moreover, for simplicity, we assume that the prior is a centered Gaussian, i.e.  $\priormean = 0$.
Again, the generalization to the case of non-centered prior measure is straightforward.
With these simplifications, the mean and covariance of the posterior measure are given by,
\begin{equation}\label{equ:postmeas}
\postcov = (\fop^*\fop + \priorcov^{-1})^{-1},
\qquad
\postmean = \postcov\fop^*\obs.
\end{equation}
In what follows, we use the notation,
\begin{equation}\label{equ:HM}
\HM = \fop^*\fop. 
\end{equation}
The motivation behind this notation is 
that $\fop^*\fop$ is the Hessian of the functional, $\Phi(\ipar; \obs)$, which measures the
magnitude of the \emph{misfit} between experimental data $\obs$ and model prediction $\fop \ipar$.
Note that in statistical terms, $\HM$ is the Hessian of the negative log-likelihood which is also 
referred to as the \emph{Fisher information matrix}.
Another notation we shall use frequently is,
\begin{equation}\label{equ:HMt}
   \HMt = \priorcov^{1/2} \HM \priorcov^{1/2}.
\end{equation}
Intuitively, this \emph{prior-preconditioned} $\HM$ can be thought of as the information matrix
which has been filtered through the prior. 
To further appreciate the notion of the prior-preconditioned misfit Hessian,
we note that the second moment of the parameter-to-observable map, considered as a 
random variable $\fop:(\hilb, \B(\hilb), \priorm) \to (\R^q, \B(\R^q))$ is given by,
\[
\int_\hilb |\fop u|^2 \,\priorm(du) = \int_\hilb \eipq{\fop u}{\fop u}\,\priorm(du)  = \int_\hilb \ip{\HM u}{u}\,\priorm(du)  
= \trace(\priorcov \HM) = \trace(\HMt). 
\]

\subsection{A spectral point of view of uncertainty reduction}
Let $\HMt$ be the prior-preconditioned misfit Hessian as defined in~\eqref{equ:HMt} and denote
\begin{equation}\label{equ:Sop}
\S = (I + \HMt)^{-1}.
\end{equation}
The posterior covariance operator, $\postcov$, given in~\eqref{equ:postmeas} 
can be written as, $\postcov = \prhalf (I + \HMt)^{-1} \prhalf = \prhalf \S \prhalf$.
We consider the quantity,
\[
   \delta(\priorcov, \postcov) := \trace(\priorcov) - \trace(\postcov) = \trace\big(\prhalf(I - \S)\prhalf\big).
\]
For the class of Bayesian linear inverse problems considered in the present work, it is
straightforward to show that $\delta(\priorcov,\postcov) \geq 0$. In particular, we
note that if $\{\lambda_i\}$ and $\{e_i\}$ are the eigenvalues and the respective eigenvectors of
$\HMt$, then
\[
\ip{e_i}{(I - \S)e_i} = 1 - \ip{e_i}{\S e_i} = 1 - 1/(1+\lambda_i) = \lambda_i/(1+\lambda_i) \geq 0,
\quad i = 1, 2, \ldots,
\]
which shows that $\delta(\priorcov,\postcov) = \trace(\prhalf(I - \S)\prhalf) \geq 0$.
The quantity $\delta(\priorcov,\postcov)$ can thus be considered a measure of
variance (uncertainty) reduction. More precisely,
we consider for each $i \geq 1$,
\[
   \ip{e_i}{\postcov e_i} = \int_\hilb \ip{e_i}{u - \postmean}^2\postm(du),
\]
which measures the posterior variance of the coordinate of $u$ in the direction $e_i$.
\begin{proposition}
Let $\{\lambda_i, e_i\}_1^\infty$ be eigenpairs of $\HMt$. Then, $\ip{e_i}{\postcov e_i} \leq \ip{e_i}{\priorcov e_i}$,
for all $i \geq 1$.
\end{proposition}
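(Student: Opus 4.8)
The plan is to prove the stronger, basis-free statement that $\priorcov - \postcov$ is a positive operator, and then simply take $x = e_i$. The eigenstructure of $\HMt$ enters only at the very end; the inequality itself holds simultaneously for every $x \in \hilb$.

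First I would use the factorization $\postcov = \prhalf\S\prhalf$ recorded above, together with the self-adjointness of $\prhalf$ and the identity $\priorcov = \prhalf\prhalf$, to write
\[
   \priorcov - \postcov = \prhalf(I - \S)\prhalf .
\]
Hence, for any $x \in \hilb$, putting $w = \prhalf x$,
\[
   \ip{x}{\priorcov x} - \ip{x}{\postcov x} = \ip{w}{(I - \S)w},
\]
so it suffices to show that $I - \S$ is positive. (Equivalently, working directly with the eigenvectors, $\ip{e_i}{\postcov e_i} = \ip{\prhalf e_i}{\S\prhalf e_i}$ and $\ip{e_i}{\priorcov e_i} = \norm{\prhalf e_i}^2$, so the claim reduces to $\ip{\prhalf e_i}{(I-\S)\prhalf e_i} \geq 0$.)

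Second I would verify that $I - \S$ is positive. Since $\HMt = \prhalf\HM\prhalf$ is positive and self-adjoint, and compact (indeed of finite rank, as $\fop$ has range in $\R^q$) so that it possesses the orthonormal eigenbasis $\{e_i\}$ with eigenvalues $\lambda_i \geq 0$, the operator $I + \HMt$ is bounded below by $I$, and operator monotonicity of the inverse gives $0 < \S \leq I$, i.e. $\ip{z}{(I-\S)z} \geq 0$ for all $z \in \hilb$. Equivalently, expanding $z = \sum_i c_i e_i$ and using $\S e_i = (1+\lambda_i)^{-1} e_i$,
\[
   \ip{z}{(I-\S)z} = \sum_{i=1}^\infty \frac{\lambda_i}{1+\lambda_i}\, c_i^2 \;\geq\; 0,
\]
which is exactly the per-coordinate computation already carried out in the discussion of $\delta(\priorcov,\postcov)$. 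Combining the two steps yields $\ip{x}{\postcov x} \leq \ip{x}{\priorcov x}$ for every $x \in \hilb$, and in particular for $x = e_i$, which is the assertion.

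There is no serious obstacle here: the only point that calls for the infinite-dimensional machinery is the inequality $\S \leq I$, but this follows immediately from either operator monotonicity of $A \mapsto A^{-1}$ on strictly positive operators or the spectral representation of the compact operator $\HMt$. I would also remark that the argument delivers more than is stated — the full operator ordering $\postcov \leq \priorcov$, not merely the diagonal comparison along the eigenvectors of $\HMt$ — and that this refines the inequality $\delta(\priorcov,\postcov) \geq 0$ obtained just above.
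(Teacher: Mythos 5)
Your argument is correct and is essentially the paper's own proof: both rest on the factorization $\postcov = \prhalf\S\prhalf$ and the termwise bound $(1+\lambda_j)^{-1} \leq 1$ in the eigenbasis of $\HMt$, the paper simply applying it directly to $\ip{\prhalf e_i}{\S\prhalf e_i}$ via Parseval rather than first packaging it as the operator inequality $\S \leq I$. Your formulation does yield the slightly stronger conclusion $\postcov \leq \priorcov$ as operators, but the underlying computation is the same one the paper already carries out for $\delta(\priorcov,\postcov)\geq 0$.
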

\begin{proof}
Note that for each $v \in \hilb$, $\S v = \sum_j (1+\lambda_j)^{-1} \ip{e_j}{v} e_j$.
Hence,
\begin{multline*}
   \ip{e_i}{\postcov e_i} = \ip{e_i}{\prhalf \S \prhalf e_i}
      = \ip{\prhalf e_i} {\S \prhalf e_i} \\
      =\sum_j (1+\lambda_j)^{-1} \ip{e_j}{\prhalf e_i}^2
   \leq \sum_j  \ip{e_j}{\prhalf e_i}^2 = \norm{ \prhalf e_i}^2 = \ip{e_i}{\priorcov e_i},
\end{multline*}
where the penultimate equality follows from Parseval's identity.\qedhere
\end{proof}
Also,
\[
 \trace(\postcov) = \trace(\priorcov) - \trace(\prhalf(I - \S)\prhalf)
                  = \sum_{j = 1}^\infty (1 - \alpha_j) \ip{e_j}{\priorcov e_j},
\]
where $\alpha_j = \lambda_j / (1 + \lambda_j)$. Thus, for eigenvalues $\lambda_j$ that are large, we have $\alpha_j \approx 1$ which
suggests that significant uncertainty reduction occurs in such directions.
It is well known that for large classes of
ill-posed Bayesian inverse problems, the eigenvalues $\lambda_i$ of $\HMt$ decay rapidly to zero, with a relatively
small number of dominant eigenvalues indicating the \emph{data-informed} directions in the parameter space.
This allows ``focusing'' the inference to low-dimensional subspaces of the parameter space $\hilb$. Such
ideas have been used to develop efficient numerical algorithms
for solution of infinite-dimensional Bayesian inverse problems in works such as~\cite{BuiEtAl2013,FlathEtAl2011} 
and for algorithms for computing A-optimal experimental designs for infinite-dimensional Bayesian linear
inverse problems in~\cite{AlexanderianEtAl2014}.

\section{KL divergence from posterior to prior and expected information gain}\label{sec:kldiv}

Let us first motivate the discussion by recalling the form of the KL divergence from the
posterior to prior in the finite-dimensional case. We use boldface letters for the finite-dimensional
versions of the operators appearing in the Bayesian inverse problem. 
To indicate that we work in $\R^n$, we denote by $\dpriorm$ and $\dpostm$ the prior and posterior
measures in the $n$-dimensional case. The following expression for $\DKL{\dpostm}{\dpriorm}$ is
well known:
\begin{multline}\label{equ:DKL-fd}
  \DKL{\dpostm}{\dpriorm}\\ = \frac{1}{2}\,\Big[ -\log\left( \frac{\det\dpostcov}{\det \dpriorcov}\right) - n + \trace(\dpriorcov^{-1} \dpostcov) + \eip{\dpriorcov^{-1}\dpostmean}{\dpostmean} 
                      \Big].
\end{multline}
Note that the above expression is not meaningful in the infinite-dimensional case. 
For one thing, $n$ appears explicitly in the expression. 
Moreover, in the infinite-dimensional case, $\priorcov$ is a trace-class operator
whose eigenvalues accumulate at zero, so dividing by the determinant of the prior covariance is problematic 
as $n \to \infty$. Finally, in the infinite-dimensional case, $\priorcov^{-1}$ is
the inverse of a compact operator and hence is unbounded; therefore, the trace term, which 
involves the inverse of the prior covariance, needs clarification. However, if we reformulate the above expression, we obtain an expression that has meaning in the infinite-dimensional case. 

A straightforward calculation shows that the first term on the right in~\eqref{equ:DKL-fd} may be simplified: 
\begin{eqnarray}\label{equ:detterm}
  -\log\left( \frac{\det\dpostcov}{\det \dpriorcov}\right) \!\!\!&=&\!\!\! \log\left( \frac{\det\dpriorcov}{\det \dpostcov}\right) = \log \det \left( \dpriorcov \dpostcov^{-1}\right) \nonumber \\
  &&\hspace{-.75in}= \log \det \left( \dpriorcov^{1/2} (\HMd + \dpriorcov^{-1}) \dpriorcov^{1/2} \right) \\
  &&\hspace{-.25in}= \log \det (\HMtd + \mat{I}). \nonumber
\end{eqnarray}

\noindent Recall that, in general, if $\mathbf{A}$ is Hermitian, then there exists a unitary matrix $\mathbf{U}$ such that 
\begin{equation*}
  \mathbf{D} = [\lambda_i\delta_{ij}] = \mathbf{U^\ast\!AU}
\end{equation*}
is diagonal. In this case, the diagonal elements are the eigenvalues of $\mathbf{A}$, and
\begin{equation*}
  \det(\mathbf{I} + \mathbf{A}) = \det(\mathbf{U})\det(\mathbf{I} + \mathbf{A})\det(\mathbf{U^\ast}) = \prod_{i=1}^n(1 + \lambda_i).
\end{equation*}
In the infinite-dimensional setting, given a trace-class operator $\A \in \Ltp(\hilb)$, 
\begin{equation*}
  \lim_{n\rightarrow\infty}\log\left(\prod_{i=1}^n(1 + \lambda_i(\A))\right) = \lim_{n\rightarrow\infty}\sum_{i=1}^n\log(1 + \lambda_i(\A)) \leq \lim_{n\rightarrow\infty}\sum_{i=1}^n\lambda_i(\A) < \infty,
\end{equation*}
so, motivated by the $n$-dimensional case, we may define the Fredholm determinant of $I + \A$ as
\[
   \det (I + \A) = \prod_{i = 1}^\infty (1 + \lambda_i(\A)), 
\]
where $\lambda_i(\A)$ are the eigenvalues of $\A$~\cite{Simon1977}. Hence, the final expression in equation~\eqref{equ:detterm} is meaningful in infinite dimensions.
Next, we consider the term $-n+\trace(\dpriorcov^{-1} \dpostcov)$:
\begin{eqnarray}\label{equ:traceterm}
  &&-n+\trace(\dpriorcov^{-1} \dpostcov) = -\trace(\mat{I}) + \trace(\dpriorcov^{-1} \dpostcov) \nonumber \\
  &&\hspace{.75in}= \trace( \dpriorcov^{-1} \dpostcov - \mat{I}) = \trace\big( (\dpriorcov^{-1} - \dpostcov^{-1})\dpostcov\big) 
= -\trace(\HMd \dpostcov),\nonumber
\end{eqnarray}

\noindent where in the last step we used the fact that $\dpostcov^{-1} = \HMd + \dpriorcov^{-1}$.
Notice that the argument of the trace in the final expression is in fact a trace-class operator
in the infinite-dimensional case and has a well-defined trace.
Combining~\eqref{equ:detterm} and~\eqref{equ:traceterm}
and defining the
inner-product $\cipfd{\vec{x}}{\vec{y}} = \eip{\dpriorcov^{-1/2}\vec{x}}{\dpriorcov^{-1/2}\vec{y}}$ for $\vec{x}, \vec{y} \in \R^n$,
we rewrite~\eqref{equ:DKL-fd},  
\begin{equation}\label{equ:DKL-fd-alt}
  \DKL{\dpostm}{\dpriorm} =  
  \frac{1}{2}\Big[ \log\det (\HMtd + \mat{I}) - \trace(\HMd \dpostcov) + 
             \cipfd{\dpostmean}{\dpostmean} 
             \Big].
\end{equation}

In Section~\ref{sec:DKLbayes} we derive, rigorously, alternate forms of the expression for the 
KL divergence from posterior to prior in the infinite-dimensional 
Hilbert space setting; as we shall see shortly, one of those forms is a direct extension 
of~\eqref{equ:DKL-fd-alt} to the infinite-dimensional case.
The reason for introducing the weighted inner-product $\cipfd{\cdot}{\cdot}$ will also become clear 
in the discussion that follows.

\subsection{The KL-divergence from posterior to prior}\label{sec:DKLbayes}
The following result which is a consequence of Proposition 1.2.8 in~\cite{DaPratoZabczyk} will be needed in
what follows.
\begin{proposition}\label{prp:exp_quad_form}
Let $\A \in \LL(\hilb)$ be a positive self-adjoint operator, $\mu = \GM{0}{\Q}$ a non-degenerate 
Gaussian measure on $\hilb$, and
$b \in \hilb$.
Then,
\newcommand{\AT}{\tilde{\mathcal{A}}}
\[
\int_\hilb \exp\left\{-\frac12 \ip{\A x }{x} + \ip{b}{x}\right\} \, \mu(dx) = 
\det (I + \AT)^{-1/2} \exp\Big\{ \frac12 \big\|(I + \AT)^{-1/2} \Q^{1/2} b\big\|_\hilb^2\Big\},
\]
where $\AT = \Q^{1/2}\A \Q^{1/2}$.
\end{proposition}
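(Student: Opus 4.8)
The plan is to reduce the identity to the classical finite-dimensional Gaussian integral formula by truncating $\hilb$ to the first $n$ coordinates and then letting $n\to\infty$; indeed, the right-hand side is visibly the limiting form of the $\R^n$ identity $\int \exp\{-\tfrac12 x^\top M x + b^\top x\}\,\GM{0}{\Sigma}(dx) = \det(I+\Sigma^{1/2}M\Sigma^{1/2})^{-1/2}\exp\{\tfrac12\|(I+\Sigma^{1/2}M\Sigma^{1/2})^{-1/2}\Sigma^{1/2}b\|^2\}$, obtained from the usual formula after rewriting $(\Sigma^{-1}+M)^{-1} = \Sigma^{1/2}(I+\Sigma^{1/2}M\Sigma^{1/2})^{-1}\Sigma^{1/2}$. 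Before anything I would check that the right-hand side makes sense: since $\Q\in\Ltp(\hilb)$, $\Q^{1/2}$ is Hilbert--Schmidt, so $\tilde{\A}:=\Q^{1/2}\A\Q^{1/2}$ is trace-class, positive and self-adjoint; hence $\det(I+\tilde{\A})=\prod_i(1+\lambda_i(\tilde{\A}))\in[1,\infty)$ is well defined as in Section~\ref{sec:kldiv}, and $I+\tilde{\A}\succeq I$ makes $(I+\tilde{\A})^{-1/2}$ a bounded operator, so the exponent is finite.

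Next I would fix an orthonormal eigenbasis $\{e_k\}$ of $\Q$ with $\Q e_k=\gamma_k e_k$, $\gamma_k>0$ (possible since $\ker(\Q)=\{0\}$), let $P_n$ be the orthogonal projection onto $\span\{e_1,\dots,e_n\}$, and set $\A_n=P_n\A P_n$, $b_n=P_n b$, $\tilde{\A}_n=\Q^{1/2}\A_n\Q^{1/2}$. Since $P_n$ commutes with $\Q^{1/2}$ and the coordinates $\ip{e_k}{\cdot}$ are, under $\mu$, independent $\GM{0}{\gamma_k}$ variables (standard for a Gaussian measure diagonalized by its covariance eigenbasis), the $n$-dimensional marginal of $\mu$ is $\GM{0}{\diag(\gamma_1,\dots,\gamma_n)}$, and the finite-dimensional formula on $\ran(P_n)$ gives
\[
I_n:=\int_\hilb \exp\Big\{-\tfrac12\ip{\A_n x}{x}+\ip{b_n}{x}\Big\}\,\mu(dx)=\det(I+\tilde{\A}_n)^{-1/2}\exp\Big\{\tfrac12\big\|(I+\tilde{\A}_n)^{-1/2}\Q^{1/2}b_n\big\|^2\Big\},
\]
using that $\A_n\succeq0$ forces $I+\tilde{\A}_n\succ0$, that $\tilde{\A}_n=P_n\tilde{\A}_n P_n$ acts as the identity off $\ran(P_n)$ so its $\R^n$- and Fredholm determinants agree, and that $\diag(\gamma_k)^{1/2}$ agrees with $\Q^{1/2}$ on $\ran(P_n)$.

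The core of the argument is the two passages to the limit. For the left-hand side, $P_n x\to x$ in $\hilb$ for every $x$, so the integrand converges pointwise to $\exp\{-\tfrac12\ip{\A x}{x}+\ip{b}{x}\}$; since $\A$ is positive, the integrand is dominated by $\exp\{\ip{b_n}{x}\}\le\exp\{\norm{b}\,\norm{x}\}$, which is $\mu$-integrable by Fernique's theorem~\cite{DaPrato}, so dominated convergence yields $I_n\to\int_\hilb\exp\{-\tfrac12\ip{\A x}{x}+\ip{b}{x}\}\,\mu(dx)$, the left side of the claim. For the right-hand side I would prove $\tilde{\A}_n\to\tilde{\A}$ in trace norm: $\tilde{\A}_n-\tilde{\A}=\Q^{1/2}(\A_n-\A)\Q^{1/2}$, so $\|\tilde{\A}_n-\tilde{\A}\|_{\mathrm{tr}}\le\|(\A_n-\A)\Q^{1/2}\|_{\mathrm{HS}}\,\|\Q^{1/2}\|_{\mathrm{HS}}$, and $\|(\A_n-\A)\Q^{1/2}\|_{\mathrm{HS}}^2=\sum_k\gamma_k\|(\A_n-\A)e_k\|^2\to0$ by dominated convergence on the series (each summand is at most $4\|\A\|^2\gamma_k$, with $\sum_k\gamma_k=\trace(\Q)<\infty$, and tends to $0$ termwise). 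Trace-norm convergence then gives $\det(I+\tilde{\A}_n)\to\det(I+\tilde{\A})$ by continuity of the Fredholm determinant~\cite{Simon1977}, and a fortiori operator-norm convergence, hence $(I+\tilde{\A}_n)^{-1}\to(I+\tilde{\A})^{-1}$ in operator norm; combining this with $\Q^{1/2}b_n\to\Q^{1/2}b$ and $\|(I+\tilde{\A}_n)^{-1/2}\Q^{1/2}b_n\|^2=\ip{(I+\tilde{\A}_n)^{-1}\Q^{1/2}b_n}{\Q^{1/2}b_n}$ handles the exponent, and the identity follows.

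I expect the genuinely delicate step to be the trace-norm (rather than merely operator-norm) convergence of $\tilde{\A}_n$, since the latter alone is not enough for continuity of the Fredholm determinant; the Hilbert--Schmidt estimate above is precisely what upgrades it. A shortcut that sidesteps the truncation entirely is to verify that the present hypotheses match those of Proposition~1.2.8 in~\cite{DaPratoZabczyk} and invoke that result directly, which may well be the route taken below.
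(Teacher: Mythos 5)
Your argument is correct, but it is not the paper's: the paper gives no proof at all, recording the identity as a consequence of Proposition~1.2.8 of~\cite{DaPratoZabczyk} --- the shortcut you flag in your final sentence is exactly the route taken. What your finite-dimensional approximation buys is a self-contained derivation from the classical $\R^n$ Gaussian integral: truncating along the eigenbasis of $\Q$ makes the marginal an explicit product measure; positivity of $\A$ plus Fernique's theorem supplies the dominating function $\exp\{\norm{b}\,\norm{x}\}$ for the left-hand side; and your Hilbert--Schmidt estimate
\[
\big\|\Q^{1/2}(P_n\A P_n-\A)\Q^{1/2}\big\|_{\mathrm{tr}}\le\big\|\Q^{1/2}\big\|_{\mathrm{HS}}\,\big\|(P_n\A P_n-\A)\Q^{1/2}\big\|_{\mathrm{HS}}\longrightarrow 0
\]
correctly upgrades operator-norm to trace-norm convergence, which is indeed the hypothesis needed for continuity of the Fredholm determinant; you are also right that $P_n$ commuting with $\Q^{1/2}$ is what lets the $\R^n$ determinant coincide with the Fredholm determinant of the truncated operator. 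The only price is length; the paper's citation buys brevity at the cost of sending the reader to a reference whose hypotheses (positivity and self-adjointness of $\A$, non-degeneracy of $\Q$) must still be matched, which your opening well-posedness check in effect does.
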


In the following technical lemma, we calculate the expression for $\ZZ_0$, introduced in equation~\eqref{equ:bayes-phi}.

\begin{lemma}\label{lem:Zconst}
Let $\Phi(\ipar; \obs) = \frac12 (\fop\ipar - \obs)^T \ncov^{-1} (\fop\ipar - \obs)$, as defined by equation~\eqref{equ:phi}. Then,
\[
  \ZZ_0(\obs) := \int_\hilb \exp\{-\Phi(u; \obs)\} \, \priorm(du) = \exp\left\{-\frac12 |\obs|^2\right\} \det(I + \HMt)^{-1/2}  \exp\left\{\frac12 \ip{\postcov b}{b}\right\},
\]
where $b = \fop^*\obs$ and $\postcov = (\fop^*\fop + \priorcov^{-1})^{-1}$, as in equation~\eqref{equ:postmeas}.
\end{lemma}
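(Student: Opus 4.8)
The plan is to massage the integrand into the exact shape handled by Proposition~\ref{prp:exp_quad_form} and then read off the answer. Recall the standing simplifications $\ncov = \mat{I}$ and $\priormean = 0$. First I would expand the misfit quadratic form, separating the part that does not depend on $\ipar$:
\[
   \Phi(\ipar;\obs) = \frac12\,|\fop\ipar - \obs|^2 = \frac12\eipq{\fop\ipar}{\fop\ipar} - \eipq{\fop\ipar}{\obs} + \frac12\,|\obs|^2.
\]
Using the definition of the adjoint, $\eipq{\fop\ipar}{\obs} = \ip{\ipar}{\fop^*\obs} = \ip{b}{\ipar}$, together with $\eipq{\fop\ipar}{\fop\ipar} = \ip{\HM\ipar}{\ipar}$ for $\HM = \fop^*\fop$ as in~\eqref{equ:HM}, I arrive at
\[
   \ZZ_0(\obs) = \exp\Big\{-\frac12\,|\obs|^2\Big\}\int_\hilb \exp\Big\{-\frac12\ip{\HM\ipar}{\ipar} + \ip{b}{\ipar}\Big\}\,\priorm(d\ipar).
\]

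Second, I would apply Proposition~\ref{prp:exp_quad_form} to the remaining integral with $\A = \HM$, $\mu = \priorm = \GM{0}{\priorcov}$, and $b = \fop^*\obs$. The hypotheses hold: $\HM = \fop^*\fop$ is bounded (since $\fop$ is continuous), positive, and self-adjoint, and $\priorm$ is non-degenerate by the standing assumption $\ker(\priorcov) = \{0\}$; moreover, since $\fop$ has range in $\R^q$, the operator $\HM$ has rank at most $q$, so $\HMt = \priorcov^{1/2}\HM\priorcov^{1/2}$ — the operator denoted $\tilde{\mathcal{A}}$ in that proposition — is of finite rank, the Fredholm determinant $\det(I + \HMt)$ is in fact a finite product, and $I + \HMt \geq I$ is boundedly invertible. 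Proposition~\ref{prp:exp_quad_form} then gives
\[
   \int_\hilb \exp\Big\{-\frac12\ip{\HM\ipar}{\ipar} + \ip{b}{\ipar}\Big\}\,\priorm(d\ipar) = \det(I + \HMt)^{-1/2}\exp\Big\{\frac12\norm{(I+\HMt)^{-1/2}\prhalf b}^2\Big\}.
\]

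Third, I would identify the exponent with $\frac12\ip{\postcov b}{b}$. Expanding the norm,
\[
  \norm{(I+\HMt)^{-1/2}\prhalf b}^2 = \ip{\prhalf(I+\HMt)^{-1}\prhalf b}{b},
\]
and recalling from Section~\ref{sec:BayesInverse} that $\postcov = \prhalf\S\prhalf = \prhalf(I+\HMt)^{-1}\prhalf$, this equals $\ip{\postcov b}{b}$. Substituting back into the displayed expression for $\ZZ_0(\obs)$ gives the claimed formula.

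I do not anticipate a genuine obstacle: the substance is carried entirely by Proposition~\ref{prp:exp_quad_form}, and what remains is the bookkeeping between the $\R^q$ and $\hilb$ inner products through $\fop^*$ and the algebraic identity $\prhalf\S\prhalf = \postcov$. The only points worth a line of justification are the verification of the hypotheses of Proposition~\ref{prp:exp_quad_form} and the well-definedness of the Fredholm determinant $\det(I+\HMt)$, both immediate consequences of the finite rank of $\fop$.
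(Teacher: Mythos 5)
Your proposal is correct and follows the same route as the paper's own proof: expand $\Phi$ to isolate the $\frac12|\obs|^2$ term, apply Proposition~\ref{prp:exp_quad_form} with $\A = \HM$ and $\Q = \priorcov$ so that $\tilde{\mathcal{A}} = \HMt$, and then identify the resulting exponent with $\frac12\ip{\postcov b}{b}$ via $\postcov = \prhalf(I+\HMt)^{-1}\prhalf$. Your extra remarks verifying the hypotheses (finite rank of $\HM$, non-degeneracy of $\priorm$) are sound additions that the paper leaves implicit.
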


\begin{proof}
First note that (recall that we have assumed $\ncov = \mat{I}$)
\begin{eqnarray}\label{equ:phi-simp}
   \Phi(\ipar; \obs) \!\!\!&=&\!\!\! \frac12(\fop \ipar - \obs)^T (\fop \ipar - \obs) = \frac12 \eipq{\fop \ipar}{\fop \ipar} - \eipq{\fop \ipar}{\obs} +\frac12 \eipq{\obs}{\obs} \nonumber \\
   &&\hspace{-.5in}
= \frac12 \ip{\HM \ipar}{\ipar} - \ip{\fop^*\obs}{\ipar} + \frac12 |\obs|^2.
\end{eqnarray}
Therefore,
\[
\int_\hilb \exp\{-\Phi(u; \obs)\} \, \priorm(du) = \exp\left\{-\frac12 |\obs|^2\right\} \int_\hilb   
                                    \exp\left\{-\frac12 \ip{\HM \ipar}{\ipar} + \ip{b}{\ipar} \right\}  \priorm(d\ipar),
\]
where $b = \fop^*\obs$.
By Proposition~\ref{prp:exp_quad_form} we have,
\begin{eqnarray*}
  &&\int_\hilb \exp\left\{-\frac12 \ip{\HM \ipar}{\ipar} + \ip{b}{\ipar} \right\}  \priorm(d\ipar) 
\\
  &&\hspace{.5in}= \det(I + \priorcov^{1/2} \HM \priorcov^{1/2})^{-1/2} 
  \exp\left\{\frac12 \norm{(I + \priorcov^{1/2} \HM \priorcov^{1/2})^{-1/2}\priorcov^{1/2}b}^2\right\}
\\
  &&\hspace{1in}= \det(I + \HMt)^{-1/2}
  \exp\left\{\frac12 \norm{(I + \HMt)^{-1/2}\priorcov^{1/2}b}^2\right\}.
\end{eqnarray*}
The assertion of the lemma now follows, since
$\postcov = 
\priorcov^{1/2}(I + \HMt)^{-1} \priorcov^{1/2}$.\qedhere
\end{proof}

The following result provides the expression for the KL divergence from posterior to prior:

\begin{proposition}\label{prp:dkl}
Let $\priorm$ be a centered Gaussian measure on $\hilb$, and $\postm = \GM{\postmean}{\postcov}$ be
the posterior measure for a Bayesian linear inverse problem with additive Gaussian noise model
as described in Section~\ref{sec:BayesInverse}. Then,
\begin{equation}\label{equ:DKLbayes}
    \!\!
    \DKL{\postm}{\priorm} \!=\! \frac12 \left[\log \det(I + \HMt) - \trace(\HM \postcov) - 
    \ip{\postmean}{\fop^*(\fop\postmean - \obs)}\right].\!\!\!
\end{equation}
\end{proposition}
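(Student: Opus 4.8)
The plan is to compute $\DKL{\postm}{\priorm}$ directly from the abstract definition, using the Bayes formula in the form~\eqref{equ:bayes-phi} together with the analytic expression for $\ZZ_0(\obs)$ from Lemma~\ref{lem:Zconst}. Since $\postm = \GM{\postmean}{\postcov}$ is equivalent to $\priorm$, the Radon-Nikodym derivative is $\frac{d\postm}{d\priorm} = \ZZ_0(\obs)^{-1}\exp\{-\Phi(u;\obs)\}$, so that
\[
   \log\frac{d\postm}{d\priorm} = -\log\ZZ_0(\obs) - \Phi(u;\obs).
\]
Integrating against $\postm$ gives $\DKL{\postm}{\priorm} = -\log\ZZ_0(\obs) - \int_\hilb \Phi(u;\obs)\,\postm(du)$. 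The first term is handed to us by Lemma~\ref{lem:Zconst}: $-\log\ZZ_0(\obs) = \frac12|\obs|^2 + \frac12\log\det(I+\HMt) - \frac12\ip{\postcov b}{b}$ with $b = \fop^*\obs$.

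The main work is then evaluating the integral $\int_\hilb \Phi(u;\obs)\,\postm(du)$. Using the simplified form~\eqref{equ:phi-simp}, namely $\Phi(u;\obs) = \frac12\ip{\HM u}{u} - \ip{b}{u} + \frac12|\obs|^2$, I would split this into three pieces. For the quadratic piece $\int_\hilb \ip{\HM u}{u}\,\postm(du)$ I invoke Lemma~\ref{lem:quadforms} with $\A = \HM$ and $\mu = \postm$ (noting $\HM = \fop^*\fop$ is bounded), obtaining $\trace(\HM\postcov) + \ip{\HM\postmean}{\postmean}$. For the linear piece, $\int_\hilb \ip{b}{u}\,\postm(du) = \ip{b}{\postmean}$ by the definition of the mean of $\postm$. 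The constant piece integrates to $\frac12|\obs|^2$. Hence
\[
   \int_\hilb \Phi(u;\obs)\,\postm(du) = \tfrac12\trace(\HM\postcov) + \tfrac12\ip{\HM\postmean}{\postmean} - \ip{b}{\postmean} + \tfrac12|\obs|^2.
\]

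Assembling the two contributions, the $\frac12|\obs|^2$ terms cancel, and we are left with
\[
   \DKL{\postm}{\priorm} = \tfrac12\log\det(I+\HMt) - \tfrac12\ip{\postcov b}{b} - \tfrac12\trace(\HM\postcov) - \tfrac12\ip{\HM\postmean}{\postmean} + \ip{b}{\postmean}.
\]
The remaining task is to show the non-trace terms collapse to $-\frac12\ip{\postmean}{\fop^*(\fop\postmean - \obs)} = -\frac12\ip{\HM\postmean}{\postmean} + \frac12\ip{\postmean}{\fop^*\obs} = -\frac12\ip{\HM\postmean}{\postmean} + \frac12\ip{b}{\postmean}$. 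So I need $-\frac12\ip{\postcov b}{b} + \ip{b}{\postmean} = \frac12\ip{b}{\postmean}$, i.e. $\ip{\postcov b}{b} = \ip{b}{\postmean}$. This is immediate from $\postmean = \postcov\fop^*\obs = \postcov b$ (equation~\eqref{equ:postmeas}) and self-adjointness of $\postcov$. I expect the only genuine subtlety is making sure each integral is finite and the applicability hypotheses of Lemma~\ref{lem:quadforms} and Lemma~\ref{lem:Zconst} are met — in particular that $\HM$ enters only through bounded operators so no unbounded $\priorcov^{-1}$ ever appears — but these are all guaranteed by the Gaussian linear setup of Section~\ref{sec:BayesInverse}; the algebra itself is routine bookkeeping.
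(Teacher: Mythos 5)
Your proposal is correct and follows essentially the same route as the paper's own proof: both start from $\DKL{\postm}{\priorm} = -\log\ZZ_0(\obs) - \int_\hilb \Phi(u;\obs)\,\postm(du)$, evaluate the quadratic, linear, and constant pieces of $\Phi$ via Lemma~\ref{lem:quadforms} and the definition of the mean, plug in Lemma~\ref{lem:Zconst} for $-\log\ZZ_0(\obs)$, and collapse the remaining terms using $\postmean = \postcov\fop^*\obs$. The bookkeeping checks out, so nothing further is needed.
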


\begin{proof} Consider~\eqref{equ:bayes-phi},
and note that
\begin{equation}\label{equ:dkl}
\begin{aligned}
    \DKL{\postm}{\priorm} &= \int_\hilb \log\left\{ \frac{d\postm}{d\priorm}\right\} \, \postm(d\ipar)\\ 
    &= -\log \ZZ_0(\obs) - \int_\hilb \Phi(u; \obs) \, \postm(d\ipar). 
\end{aligned}
\end{equation}
Using~\eqref{equ:phi-simp} to expand $\Phi(\ipar; \obs)$, the integral on the right becomes
\[
\int_\hilb \Phi(u; \obs) \, \postm(d\ipar) = 
\frac12 \int_\hilb \ip{\HM \ipar}{\ipar}\, \postm(d\ipar) - \int_\hilb \ip{\fop^*\obs}{\ipar} \postm(d\ipar) 
   + \frac12 |\obs|^2.
\]
The second integral evaluates to $\ip{\fop^*\obs}{\postmean}$, by the definition of the mean of the
measure, and the first integral is evaluated via the formula for the integral of a quadratic
form:
\[
\int_\hilb \ip{\HM \ipar}{\ipar}\, \postm(d\ipar)
= \trace(\HM \postcov) + \ip{\postmean}{\HM\postmean}.
\]
Using the expression for $\ZZ_0$ from Lemma~\ref{lem:Zconst},
\[
\begin{aligned}
-\log \ZZ_0(\obs) &= \frac12 |\obs|^2 - \log \det(I + \HMt)^{-1/2} - \frac12 \ip{\postcov \fop^*\obs}{\fop^*\obs} \\
          &= \frac12 |\obs|^2 + \frac12 \log \det(I + \HMt) - \frac12 \ip{\postmean}{\fop^*\obs},
\end{aligned}
\]
where we have also used the definition of $\postmean$. Substituting into equation~\eqref{equ:dkl}, we obtain
\[
\begin{aligned}
\DKL{\postm}{\priorm} = &\frac12 \log \det(I + \HMt) - \frac12 \ip{\postmean}{\fop^*\obs} \\
&- \frac12 \trace(\HM \postcov) - \frac12 \ip{\postmean}{\HM\postmean} + \ip{\fop^*\obs}{\postmean},
\end{aligned}
\]
which, after some algebraic manipulation and recalling that $\HM = \fop^*\fop$, yields the assertion of the proposition.\qedhere
\end{proof}

Let us note the following interpretation for the last term appearing in $\DKL{\postm}{\priorm}$ 
given in~\eqref{equ:DKLbayes}. Consider the function $\Phi(u) = \frac12 (\fop \ipar - \obs)^T (\fop \ipar - \obs)$,
which is the familiar misfit term in the deterministic interpretation of the corresponding
linear inverse problem. (For notational simplicity 
we have suppressed the dependence of $\Phi$ on the data vector $\obs$.)
Note that 
the variational derivative of $\Phi$ at a point $u \in \hilb$ in direction $h \in \hilb$ is given by,
\[
   \Phi'(u)h = \frac{d}{d\eps}\Big|_{\eps = 0} \Phi(u + \eps h) =  
\eipq{\fop u - \obs}{\fop h}
=\ip{\fop^*(\fop u - \obs)}{h}.
\] 
Next, recall that the mean of the posterior, $\postmean$,
of the present Bayesian linear inverse problem coincides with the MAP
estimator for the inference parameter $u$ and is the global minimizer of
the following regularized cost functional~\cite{Stuart10,DashtiLawStuartEtAl13}
\[
   \J(u) = \Phi(u) + \frac12 \cip{u}{u}
\] 
with minimization done over the space, $\CM = \ran(\priorcov^{1/2}) \subset \hilb$.\footnote{Given a 
Gaussian measure $\mu = \GM{m}{\C}$ on a Hilbert space $H$, 
the space $\ran(\C^{1/2})$ is called the Cameron-Martin space 
corresponding to the measure $\mu$. It is a known result (see e.g.~\cite{DaPrato}) that
if the Hilbert space $H$ is infinite-dimensional, $\mu\big(\ran(\C^{1/2})\big) = 0$.}
The inner-product in the regularization term is given by $\cip{\cdot}{\cdot} = \ip{\priorcov^{-1/2} x}{\priorcov^{-1/2} y}$ for $x, y \in 
\CM$.
We have, by the first order optimality conditions $\J'(\postmean)h = 0$ for every $h \in \CM$, that is,
\[
\ip{\fop^*(\fop u - \obs)}{h} + \cip{\postmean}{h} = 0, \quad \text{ for all } h \in \CM.
\] 
Thus, in particular, $-\ip{\fop^*(\fop u - \obs)}{\postmean} = \cip{\postmean}{\postmean}$.
This leads to the following
alternate form of expression~\eqref{equ:DKLbayes}:
\begin{equation}\label{equ:DKLbayes_alt}
    \DKL{\postm}{\priorm} = \frac12 \left[\log \det(I + \HMt) - \trace(\HM \postcov) +
    \cip{\postmean}{\postmean}\right].
\end{equation}
Note that this expression for the KL divergence $\DKL{\postm}{\priorm}$ 
is the direct extension of the corresponding expression 
in the case of $\hilb = \R^n$ as given in~\eqref{equ:DKL-fd-alt} to infinite dimensions. 

\begin{remark}
A straightforward modification of the arguments leading to equation~\eqref{equ:DKLbayes}, 
for the case of a prior $\priorm = \GM{\priormean}{\priorcov}$, leads to
\[
    \DKL{\postm}{\priorm} = \frac12 \left[\log \det(I + \HMt) - \trace(\HM \postcov) -
    \ip{\postmean - \priormean}{\fop^*(\fop\postmean - \obs)}\right].
\]
Moreover, in view of the argument leading to~\eqref{equ:DKLbayes_alt}, we have:
\[
    \DKL{\postm}{\priorm} = \frac12 \left[\log \det(I + \HMt) - \trace(\HM \postcov) +
    \cip{\postmean - \priormean}{\postmean - \priormean}\right].
\]
\end{remark}

\subsection{Expected information gain}\label{sec:expectedDKL}
Here we derive the expression for the expected information gain. 
We first prove the following technical lemma which is needed in the proof of the main 
result in this section.
\begin{lemma}\label{lem:dblexps}
The following identities hold.
\begin{enumerate}
\item $\dblexp{\ip{\postmean}{\fop^*\obs}} = \trace(\HMt)$
\item $\dblexp{\ip{\postmean}{\HM \postmean}} = \trace(\S\HMt^2)$,
\end{enumerate}
where $\HMt$ and $\S$ be as in~\eqref{equ:HMt} and~\eqref{equ:Sop} respectively.  
\end{lemma}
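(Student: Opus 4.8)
The plan is to compute both double expectations by first integrating over the data $\obs \mid \ipar$ (using the additive Gaussian noise model $\obs = \fop\ipar + \vec\eta$ with $\vec\eta \sim \GM{0}{I}$), and then integrating the resulting expression over the prior $\priorm = \GM{0}{\priorcov}$, invoking Lemma~\ref{lem:quadforms} on the formula for the integral of a quadratic form. Throughout I would substitute $\postmean = \postcov\fop^*\obs$ and $\HM = \fop^*\fop$, and repeatedly use the cyclic property of the trace together with the identity $\postcov = \priorcov^{1/2}\S\priorcov^{1/2}$, $\HMt = \priorcov^{1/2}\HM\priorcov^{1/2}$, and $\S = (I+\HMt)^{-1}$, so that $\HMt$ and $\S$ commute.

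For item~(1): write $\ip{\postmean}{\fop^*\obs} = \ip{\postcov\fop^*\obs}{\fop^*\obs} = \eipq{\fop\postcov\fop^*\obs}{\obs}$, which is a quadratic form in $\obs$. Taking the inner expectation $\avemu{\obs\mid\ipar}{\cdot}$, since $\obs\mid\ipar \sim \GM{\fop\ipar}{I}$, I get $\trace(\fop\postcov\fop^*) + \eipq{\fop\postcov\fop^*\fop\ipar}{\fop\ipar}$. The first term is constant in $\ipar$ and equals $\trace(\postcov\fop^*\fop) = \trace(\postcov\HM) = \trace(\priorcov^{1/2}\S\priorcov^{1/2}\HM) = \trace(\S\HMt)$. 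For the second term I would take the prior expectation via Lemma~\ref{lem:quadforms} (with $\A = \fop^*\fop\postcov\fop^*\fop = \HM\postcov\HM$, mean zero), giving $\trace(\HM\postcov\HM\priorcov) = \trace(\postcov\HM\priorcov\HM)$; rewriting in terms of $\HMt$ and $\S$ this is $\trace(\S\HMt^2)$. Adding the two contributions yields $\trace(\S\HMt) + \trace(\S\HMt^2) = \trace\!\big(\S\HMt(I+\HMt)\big) = \trace(\HMt)$, using $\S(I+\HMt)=I$.

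For item~(2): $\ip{\postmean}{\HM\postmean} = \ip{\postcov\fop^*\obs}{\HM\postcov\fop^*\obs} = \eipq{\fop\postcov\HM\postcov\fop^*\obs}{\obs}$, again quadratic in $\obs$. The inner expectation over $\obs\mid\ipar\sim\GM{\fop\ipar}{I}$ gives $\trace(\fop\postcov\HM\postcov\fop^*) + \eipq{\fop\postcov\HM\postcov\fop^*\fop\ipar}{\fop\ipar}$. The first term equals $\trace(\postcov\HM\postcov\HM) = \trace\big((\S\HMt)^2\big)$ after the usual rewriting. For the second term, Lemma~\ref{lem:quadforms} with $\A = \HM\postcov\HM\postcov\HM$ and mean zero gives $\trace(\HM\postcov\HM\postcov\HM\priorcov) = \trace\big((\S\HMt)^2\HMt\big) = \trace(\S^2\HMt^3)$. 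Summing, $\trace(\S^2\HMt^2) + \trace(\S^2\HMt^3) = \trace\!\big(\S^2\HMt^2(I+\HMt)\big) = \trace(\S\HMt^2)$, again using $\S(I+\HMt)=I$. I expect the only real subtlety to be bookkeeping: making sure each intermediate operator is trace-class so that Lemma~\ref{lem:quadforms} and the cyclic trace identities apply — this follows because $\postcov$ (equivalently $\HMt$, $\S\HMt$) is trace-class and the remaining factors are bounded, so products such as $\HM\postcov\HM\postcov\HM\priorcov$ are trace-class by the results recalled in Section~\ref{sec:LinearOperators} (including $\trace(\A\Bop^*\Bop)=\trace(\Bop\A\Bop^*)$ for $\Bop=\fop$). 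The algebraic collapse via $\S(I+\HMt)=I$ is the clean payoff at the end of each case.
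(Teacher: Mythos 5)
Your proposal is correct and follows essentially the same route as the paper: evaluate the inner expectation as a quadratic form in $\obs$ via Lemma~\ref{lem:quadforms}, then the outer expectation as a quadratic form in $\ipar$, and collapse the resulting traces using cyclicity and $\S(I+\HMt)=I$. The only difference is that you carry out item~(2) explicitly, whereas the paper dismisses it as ``a similar argument''; your computation of that case is correct.
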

\begin{proof}
We present the proof of the first statement; the second one follows from a similar 
argument.  Let us begin from the inner expectation. Note that, by the definition of 
$\postmean$ we have,
\[
\ip{\postmean}{\fop^*\obs} = \ip{\postcov\fop^*\obs}{\fop^*\obs}  = \eipq{\obs}{\fop \postcov \fop^* \obs}, 
\]
For clarity let us denote $L = \fop \postcov \fop^*$.
Recall that $\obs | \ipar$ is distributed according to $\GM{\fop u}{\ncov}$, and that we assumed $\ncov = \mat{I}$.
Using the formula for the expectation of a quadratic form (on $\Y = \R^q$), Lemma~\ref{lem:quadforms}, 
we have
\begin{equation*} 
\avemu{\obs | \ipar}{\ip{\postmean}{\fop^*\obs}} = \avemu{\obs | \ipar}{\eipq{\obs}{L \obs}} = \trace(L) + \eipq{\fop\ipar}{L \fop\ipar} = \trace(L) + \ip{\ipar}{\fop^*L\fop\ipar}.   
\end{equation*}
By the comment at the end of Section~\ref{sec:LinearOperators} and
recalling that $\postcov = \Cpv$, we have 
\begin{eqnarray}\label{equ:trL}
\trace(L) \!\!\!&=&\!\!\! \trace(\fop \postcov \fop^*) = \trace(\postcov\HM) = \trace(\priorcov^{1/2}\S\priorcov^{1/2}\HM)\nonumber \\
 &&\hspace{.25in}= \trace(\S\priorcov^{1/2}\HM\priorcov^{1/2}) = \trace(\S \HMt).
\end{eqnarray}
Therefore,
\begin{equation}\label{equ:inner-int}
  \avemu{\obs | \ipar}{\ip{\postmean}{\fop^*\obs}} = \trace(\S \HMt) + \ip{\ipar}{\fop^*L\fop\ipar}.  \end{equation}
Next, to compute the outer expectation we proceed as follows (keep in mind that $\priorm = \GM{0}{\priorcov}$). By Lemma~\ref{lem:quadforms},
\[
\avemu{\priorm}{\ip{\ipar}{\fop^*L\fop\ipar}} = 
\int_\hilb \ip{\ipar}{\fop^*L\fop\ipar} \, \priorm(d\ipar) =
\trace(\fop^*L\fop\priorcov);
\]
and
\begin{eqnarray}\label{equ:trGLGC}
\trace(\fop^*L\fop\priorcov) \!\!\!&=&\!\!\! \trace(\fop^*{\fop \postcov \fop^*}\fop \priorcov) = \trace(\HM \postcov \HM \priorcov)\nonumber \\
 &&\hspace{-.5in}= \trace(\priorcov^{1/2} \HM \postcov \HM \priorcov^{1/2}) = \trace(\priorcov^{1/2} \HM \Cpv \HM \priorcov^{1/2})
 = \trace(\HMt \S \HMt) = \trace(\S \HMt^2).\nonumber
\end{eqnarray}
Thus, combining equations~\eqref{equ:trL}, \eqref{equ:inner-int}, and~\eqref{equ:trGLGC} gives
\begin{eqnarray*}
\dblexp{\ip{\postmean}{\fop^*\obs}} \!\!\!&=&\!\!\! \trace(\S \HMt) + \trace(\S \HMt^2) = \trace(\S\HMt(I + \HMt)) \\
  &&\hspace{-.5in}= \trace(\HMt(I + \HMt)\S)  = \trace(\HMt),
\end{eqnarray*}
which is the first statement of the lemma.\qedhere
\end{proof}

The following theorem is the main result of this section.  
\begin{theorem}
Let $\priorm$ be a centered Gaussian prior measure on $\hilb$, and $\postm = \GM{\postmean}{\postcov}$ be
the posterior measure for a Bayesian linear inverse problem with additive Gaussian noise model
as described in Section~\ref{sec:BayesInverse}.
Then,
\[
    \avemu{\priorm}{\avemu{\obs | \ipar}{\DKL{\postm}{\priorm}}} = 
    \frac12 \log \det(I + \HMt).
\]
\end{theorem}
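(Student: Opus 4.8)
The plan is to start from the closed-form expression for $\DKL{\postm}{\priorm}$ derived in Proposition~\ref{prp:dkl}, namely
\[
    \DKL{\postm}{\priorm} = \frac12 \left[\log \det(I + \HMt) - \trace(\HM \postcov) -
    \ip{\postmean}{\fop^*(\fop\postmean - \obs)}\right],
\]
and take the double expectation $\avemu{\priorm}{\avemu{\obs|\ipar}{\cdot}}$ term by term. The first term, $\frac12\log\det(I + \HMt)$, is deterministic (it depends only on the forward operator and prior, not on $\obs$ or $\ipar$), so it passes through both expectations unchanged. The second term, $-\frac12\trace(\HM\postcov)$, is likewise deterministic since $\postcov = (\fop^*\fop + \priorcov^{-1})^{-1}$ does not depend on the data; moreover $\trace(\HM\postcov) = \trace(\S\HMt)$ by the computation in~\eqref{equ:trL}. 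So the whole problem reduces to evaluating the double expectation of the last term.

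For the last term, I would first expand $\ip{\postmean}{\fop^*(\fop\postmean - \obs)} = \ip{\postmean}{\HM\postmean} - \ip{\postmean}{\fop^*\obs}$, using $\HM = \fop^*\fop$. Then Lemma~\ref{lem:dblexps} hands me exactly these two pieces: $\dblexp{\ip{\postmean}{\fop^*\obs}} = \trace(\HMt)$ and $\dblexp{\ip{\postmean}{\HM\postmean}} = \trace(\S\HMt^2)$. Therefore
\[
    \dblexp{\ip{\postmean}{\fop^*(\fop\postmean - \obs)}} = \trace(\S\HMt^2) - \trace(\HMt).
\]
Putting the three contributions together,
\[
    \avemu{\priorm}{\avemu{\obs | \ipar}{\DKL{\postm}{\priorm}}} = \frac12\left[\log\det(I+\HMt) - \trace(\S\HMt) - \trace(\S\HMt^2) + \trace(\HMt)\right].
\]

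The final step is to check that the four trace terms collapse. Using linearity of trace, $-\trace(\S\HMt) - \trace(\S\HMt^2) + \trace(\HMt) = \trace\big(\HMt - \S\HMt(I+\HMt)\big)$; and since $\S = (I+\HMt)^{-1}$ commutes with $\HMt$, we get $\S\HMt(I+\HMt) = \HMt$, so the bracket vanishes and only $\frac12\log\det(I+\HMt)$ survives. The one point requiring a little care is justifying that all the operators whose traces appear are genuinely trace-class (so that the splitting of $\trace(\cdot)$ across the difference is legitimate) and that Fubini applies to interchange the $\obs$- and $\ipar$-integrations against $\DKL{\postm}{\priorm}$ — but these are already handled by the machinery established earlier: $\HMt$ and $\S\HMt$ are trace-class (the latter appears in~\eqref{equ:trL}), $\S\HMt^2$ is trace-class by the remark at the end of Section~\ref{sec:LinearOperators} together with boundedness of $\S$, and the integrability needed for Fubini follows from Lemma~\ref{lem:dblexps} having finite values. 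So the main obstacle is essentially bookkeeping rather than any substantive difficulty — the real content was already absorbed into Proposition~\ref{prp:dkl} and Lemma~\ref{lem:dblexps}.
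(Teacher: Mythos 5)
Your proposal is correct and follows essentially the same route as the paper's proof: both start from Proposition~\ref{prp:dkl}, observe that the first two terms are data-independent, apply Lemma~\ref{lem:dblexps} to the expanded last term, and collapse the resulting traces using $\S = (I+\HMt)^{-1}$. The only cosmetic difference is that the paper simplifies $\trace(\S\HMt^2)-\trace(\HMt)$ to $-\trace(\S\HMt)$ before cancelling against $\trace(\HM\postcov)$, whereas you combine all four trace terms at once; the algebra is identical.
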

\begin{proof}
By~\eqref{equ:DKLbayes} we have,
\begin{eqnarray}\label{equ:edkl}
\dblexp{\DKL{\postm}{\priorm}} \!\!\!&=&\!\!\! \frac12 \log \det(I + \HMt) \nonumber \\
  &&\hspace{-1.25in} -\frac12\trace(\HM \postcov) 
  -\frac12 \avemu{\priorm}{\avemu{\obs | \ipar}{\ip{\postmean}{\fop^*(\fop\postmean - \obs)}}}.
\end{eqnarray}
Using the previous lemma we can proceed as follows,
\begin{eqnarray*}
  &&\dblexp{\ip{\postmean}{\fop^*(\fop\postmean - \obs)}} \\
  &&\hspace{.5in}= \dblexp{\ip{\postmean}{\HM \postmean}} - \dblexp{\ip{\postmean}{\fop^*\obs}} \\
  &&\hspace{.5in}=\trace(\S\HMt^2) - \trace(\HMt) \\
  &&\hspace{.5in}=\trace(\S(\HMt - \S^{-1}) \HMt) =-\trace(\S\HMt). 
\end{eqnarray*}
Thus, since $\trace(\HM \postcov)\!=\!\trace(\HMt\S)\! = \!\trace(\S \HMt)$, 
the expression for the expected information gain in~\eqref{equ:edkl} simplifies to
$\dblexp{\DKL{\postm}{\priorm}} \!=\!\frac12 \log \det(I + \HMt)$.\qedhere
\end{proof}

The result above provides the infinite-dimensional analogue of Bayesian D-optimality. 
As mentioned in the introduction, an experimental design $\design$ enters the Bayesian inverse problem through the 
data likelihood. This dependence to $\design$, in the present Gaussian linear case, is manifested 
through a $\design$ dependent misfit 
Hessian, $\HM = \HM(\design)$. Consequently, the D-optimal 
design problem in the infinite-dimensional Hilbert space setting is given by,
\[
\mathop{\mathrm{maximize}~}_{\design \in \Xi} \log \det(I + \HMt(\design)),
\]
where $\Xi$ is the design space which needs to be specified in a given experimental design problem. 
\begin{remark}
As mentioned earlier, in a large class of Bayesian inverse problems, $\HMt$ admits a low-rank 
approximation, 
\[
\HMt v \approx \sum_{i = 1}^r \lambda_i \ip{e_i}{v}e_i, \quad v \in \hilb, 
\]
where $r$ is the numerical rank 
of $\HMt$ and $\{\lambda_i\}_{i=1}^r$ are the dominant eigenvalues of $\HMt$ with respective 
eigenvectors $\{e_i\}_{i=1}^r$.
Thus, 
one can use the following approximation  
\[
\log \det(I + \HMt) \approx \sum_{i = 1}^r \log (1 + \lambda_i),
\]
which enables an efficient means of approximating the expected information gain.
\end{remark}

\section{Expected mean square error of the MAP estimator and Bayesian A-optimality}\label{sec:bayesrisk}
In this section, we consider another well known optimal experimental design
criterion, Bayesian A-optimality, which aims to minimize the
trace of the posterior covariance operator. 
It is well known in the statistics literature that for inference problems
with a finite-dimensional parameter, this is equivalent to minimizing the expected mean square
error of the mean posterior which, in the
case of a Bayesian linear inverse problem, coincides with the MAP estimator.
In this section, we extend this result to the infinite-dimensional Hilbert space setting.

The MSE of the MAP estimator $\postmean$ is
\[
\MSE(\postmean; \ipar) = \avey{\norm{\ipar - \postmean}^2}.
\]
The $\MSE$ is also referred to as the \emph{risk} of the estimator $\postmean$, corresponding
to a quadratic loss function.
A straightforward calculation shows that
\begin{equation}\label{equ:MSE}
    \MSE(\postmean; \ipar) = \norm{\ipar - \avey{\postmean}}^2 + \avey{ \norm{\postmean - \avey{\postmean}}^2},
\end{equation}
Note that the first term in~\eqref{equ:MSE} quantifies the magnitude of estimation bias,
and the second term describes the variability of the estimator around its mean.
The following technical Lemma provides the expression for $\MSE(\postmean; \ipar)$
in the infinite-dimensional Hilbert space setting.

\begin{lemma}\label{lem:MSE}
Let $\postmean$ be the MAP estimator for $\ipar$ as in~\eqref{equ:postmeas}. Then,
\[
    \MSE(\postmean; \ipar) = \norm{(\postcov\HM - I)u}^2 + \trace(\postcov^2 \HM).
\]
\end{lemma}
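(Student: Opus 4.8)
The plan is to start from the bias-variance decomposition~\eqref{equ:MSE} and compute each of the two terms separately using the noise model $\obs | \ipar \sim \GM{\fop\ipar}{\mat{I}}$ together with the explicit form $\postmean = \postcov\fop^*\obs$ from~\eqref{equ:postmeas}. First I would handle the bias term $\norm{\ipar - \avey{\postmean}}^2$. Since $\postmean = \postcov\fop^*\obs$ is an affine (indeed linear) function of $\obs$, and $\avey{\obs} = \fop\ipar$, linearity of the conditional expectation gives $\avey{\postmean} = \postcov\fop^*\fop\ipar = \postcov\HM\ipar$, using $\HM = \fop^*\fop$ from~\eqref{equ:HM}. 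Hence $\ipar - \avey{\postmean} = (I - \postcov\HM)\ipar$, and the first term is exactly $\norm{(\postcov\HM - I)u}^2$, matching the claimed expression (the sign inside the norm is immaterial).

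Next I would compute the variance term $\avey{\norm{\postmean - \avey{\postmean}}^2}$. Write $\postmean - \avey{\postmean} = \postcov\fop^*(\obs - \fop\ipar) = \postcov\fop^*\vec{\eta}$, where $\vec{\eta} \sim \GM{\vec{0}}{\mat{I}}$ under $\obs|\ipar$. Then $\norm{\postcov\fop^*\vec{\eta}}^2 = \ip{\postcov\fop^*\vec{\eta}}{\postcov\fop^*\vec{\eta}}_\hilb = \eipq{\vec{\eta}}{\fop\postcov^2\fop^*\vec{\eta}}$, which is a quadratic form on $\Y = \R^q$ in the centered Gaussian vector $\vec{\eta}$. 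Applying Lemma~\ref{lem:quadforms} (or the finite-dimensional expectation-of-a-quadratic-form identity) on $\R^q$ with mean $\vec{0}$ and covariance $\mat{I}$ gives $\avey{\eipq{\vec{\eta}}{\fop\postcov^2\fop^*\vec{\eta}}} = \trace(\fop\postcov^2\fop^*)$. Finally, by the cyclicity-of-trace remark at the end of Section~\ref{sec:LinearOperators} (specifically, $\trace(\A\Bop^*\Bop) = \trace(\Bop\A\Bop^*)$ with $\A = \postcov^2 \in \Lt(\hilb)$ and $\Bop = \fop$), this equals $\trace(\postcov^2\fop^*\fop) = \trace(\postcov^2\HM)$, which is the second term in the asserted formula. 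Combining the two computations with~\eqref{equ:MSE} yields the lemma.

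The only place requiring care—the main obstacle—is the well-definedness of the trace term: I must confirm $\postcov^2\HM$ is trace-class so that $\trace(\postcov^2\HM)$ makes sense in the infinite-dimensional setting. Here $\postcov = \prhalf\S\prhalf \in \Ltp(\hilb)$ is trace-class (it is a covariance operator), hence so is $\postcov^2$, and $\HM = \fop^*\fop$ is bounded; by the result recalled at the end of Section~\ref{sec:LinearOperators}, the product of a trace-class operator with a bounded operator is trace-class, so $\postcov^2\HM \in \Lt(\hilb)$ with $\trace(\postcov^2\HM) = \trace(\HM\postcov^2) = \trace(\fop\postcov^2\fop^*)$, the latter equality also justifying the interchange above. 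With this observation in hand the remaining steps are the routine linear-algebra manipulations described, and no further subtlety arises.
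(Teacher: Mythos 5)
Your proof is correct and follows essentially the same route as the paper: the bias--variance decomposition~\eqref{equ:MSE}, the identity $\ipar - \avey{\postmean} = (I - \postcov\HM)\ipar$, and evaluation of the variance term as a trace. The only (immaterial) difference is that you compute that trace as a quadratic form in $\vec{\eta}$ on $\R^q$ and then invoke cyclicity, whereas the paper pushes $\postmean - \avey{\postmean}$ forward to the Gaussian $\GM{0}{\postcov\HM\postcov}$ on $\hilb$ and applies~\eqref{equ:second_moment} directly.
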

\begin{proof}
Consider the expression for $\MSE(\postmean; \ipar)$ given in~\eqref{equ:MSE}.
For the first term in the sum, we have
\[
\ipar - \avey{\postmean} = \ipar - \avey{\postcov\fop^* \obs} =
\ipar - \postcov \fop^*\fop u = (I - \postcov\HM)u.
\]
Next, note that $\xi(\obs) = \postmean - \avey{\postmean}$ has law $\mu = \GM{0}{\Q}$
with $\Q = (\postcov\fop^*)(\postcov\fop^*)^* = \postcov\HM\postcov$. Therefore,
\[
\avey{ \norm{\postmean - \avey{\postmean}}^2} =
\int_\hilb \norm{\xi}^2 \, \mu(d\xi) = \trace(\postcov\HM\postcov) = \trace(\postcov^2\HM).\qedhere
\]
\end{proof}

Next, we consider the average over the prior measure of the $\MSE$,
\begin{equation*}
   \avemu{\priorm}{\MSE(\postmean; \ipar)} 
                 = \int_\hilb \int_\mathscr{Y} \norm{\ipar - \postmean}^2  \like( \vec{y} | \ipar) \, d\vec{y} \, \priorm(d\ipar),
\end{equation*}
which is also known as the Bayes risk of the estimator $\postmean$, corresponding to a quadratic loss function~\cite{carlin:1997,berger:1985}.
The following result extends the well known result regarding the connection between 
the Bayes risk of the MAP estimator and the trace of the posterior covariance, for a Bayesian linear
inverse problem, to the infinite-dimensional Hilbert space setting.
\begin{theorem}
Let $\priorm$ be a centered Gaussian prior measure 
on $\hilb$, and $\postm = \GM{\postmean}{\postcov}$ be 
the posterior measure for a Bayesian linear inverse problem with additive Gaussian noise model 
as described in Section~\ref{sec:BayesInverse}. Then,
$\avemu{\priorm}{\MSE(\postmean; \ipar)} = \trace(\postcov)$.
\end{theorem}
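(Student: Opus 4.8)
The plan is to start from Lemma~\ref{lem:MSE}, which gives
\[
\MSE(\postmean;\ipar) = \norm{(\postcov\HM - I)\ipar}^2 + \trace(\postcov^2\HM),
\]
and integrate over the prior $\priorm = \GM{0}{\priorcov}$. The second term does not depend on $\ipar$, so it passes through the integral untouched. For the first term, I would write $\norm{(\postcov\HM - I)\ipar}^2 = \ip{(\postcov\HM - I)^*(\postcov\HM - I)\ipar}{\ipar}$ and apply the quadratic-form formula, Lemma~\ref{lem:quadforms} (with $m = 0$): since the prior is centered, this gives $\trace\big((\postcov\HM - I)^*(\postcov\HM - I)\priorcov\big)$, with no bias contribution. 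So the task reduces to showing
\[
\trace\big((\postcov\HM - I)^*(\postcov\HM - I)\priorcov\big) + \trace(\postcov^2\HM) = \trace(\postcov).
\]

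First I would rewrite everything in terms of the prior-preconditioned quantities $\HMt = \prhalf\HM\prhalf$ and $\S = (I + \HMt)^{-1}$, using $\postcov = \prhalf\S\prhalf$ and the cyclic property of the trace (valid here since all the operators in question are trace-class, as noted at the end of Section~\ref{sec:LinearOperators}). For the bias-type term, note $\postcov\HM - I = \prhalf\S\prhalf\HM - I$, and by similarity $\trace\big((\postcov\HM - I)^*(\postcov\HM - I)\priorcov\big) = \trace\big((\S\HMt - I)^2\,\text{(conjugated appropriately)}\big)$; being careful with adjoints and the $\prhalf$ factors, this should collapse to $\trace\big(\prhalf(\S\HMt - I)^2\prhalf\big)$ after using self-adjointness of $\S$ and $\HMt$ — equivalently $\trace\big((I - \S\HMt)\priorcov(I - \HMt\S)\big)$. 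Now use the algebraic identity $I - \S\HMt = \S(I + \HMt) - \S\HMt = \S$, i.e.\ $\S\HMt = I - \S$ (equivalently $\HMt\S = I - \S$), to simplify the bias term to $\trace(\S\priorcov\S) = \trace(\prhalf\S^2\prhalf) = \trace(\postcov\,\priorcov^{-1}\postcov)$—though it is cleaner to keep it as $\trace(\S^2\priorcov)$ in the preconditioned form. Similarly, $\trace(\postcov^2\HM) = \trace(\prhalf\S\prhalf\prhalf\S\prhalf\HM) = \trace(\S^2\prhalf\HM\prhalf\,\cdot\,\text{conjugation}) = \trace(\S^2\HMt)$ after cycling.

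So the identity to verify becomes $\trace(\S^2\priorcov) + \trace(\S^2\HMt) = \trace(\postcov)$; but wait—the first term still carries a $\priorcov$, not a clean function of $\HMt$. This is the point requiring care: I would instead keep the bias term as $\trace\big(\prhalf\S^2\prhalf\big)$ and the variance term as $\trace\big(\prhalf\S\HMt\S\prhalf\big)$, then combine under one trace to get $\trace\big(\prhalf\S(\,I + \HMt\,)\S\prhalf\big) = \trace\big(\prhalf\S(I+\HMt)\S\prhalf\big) = \trace\big(\prhalf\S\prhalf\big) = \trace(\postcov)$, using $\S(I+\HMt)\S = \S$ since $(I+\HMt)\S = I$. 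That is the whole proof: bias $\to \trace(\prhalf\S^2\prhalf)$, variance $\to \trace(\prhalf\S\HMt\S\prhalf)$, sum telescopes via $\S(I+\HMt)\S=\S$.

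\textbf{Main obstacle.} The one genuinely delicate step is the bookkeeping of adjoints and $\prhalf$ placements when rewriting $\trace\big((\postcov\HM - I)^*(\postcov\HM - I)\priorcov\big)$ in preconditioned form — I must confirm that $(\postcov\HM - I)^*(\postcov\HM - I)\priorcov$ really is trace-class (it is, since $\postcov\HM - I$ is bounded and $\priorcov$ is trace-class, and boundedness times trace-class is trace-class) so that the cyclic trace identities from Section~\ref{sec:LinearOperators} apply, and that all the similarity transformations land the operator back in $\Lt(\hilb)$ before taking traces. Once the operators are safely trace-class, everything is a routine application of $\S(I+\HMt) = I$ and cyclicity.
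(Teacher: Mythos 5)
Your proposal is correct and takes essentially the same route as the paper: both start from Lemma~\ref{lem:MSE}, turn the bias integral into $\trace\big((\postcov\HM - I)\priorcov(\postcov\HM - I)^*\big)$, and finish with trace algebra resting on $\S(I+\HMt)=I$. Your use of $I-\S\HMt=\S$ to collapse the bias term directly to $\trace\big(\prhalf\S^2\prhalf\big)$ and telescope is a mildly cleaner packaging of the paper's four-term expansion with pairwise cancellation, and your self-correction about $\priorcov$ not commuting past $\S$ is exactly right.
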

\begin{proof}
By Lemma~\ref{lem:MSE},
\begin{equation}\label{equ:risk}
\avemu{\priorm}{\MSE(\postmean; \ipar)} = \int_\hilb \norm{(\postcov\HM - I)u}^2 \, \priorm(du) + \trace(\postcov^2 \HM),
\end{equation}
and since $(\postcov\HM - I)u \sim \GM{0}{(\postcov\HM - I)\priorcov(\postcov\HM - I)^*} =: \mu$,
\[
\int_\hilb \norm{(\postcov\HM - I)u}^2 \, \priorm(du) =
\int_\hilb \norm{\xi}^2 \, \mu(d\xi) = \trace((\postcov\HM - I)\priorcov(\postcov\HM - I)^*).
\]
We proceed as follows,
\begin{eqnarray*}
\trace((\postcov\HM - I)\priorcov(\postcov\HM - I)^*) \!\!\!&=&\!\!\!
\trace((\postcov\HM - I)^*(\postcov\HM - I)\priorcov)\\
&&\hspace{-1in}= \trace(\HM\postcov^2\HM\priorcov) - \trace(\HM\postcov\priorcov)
- \trace(\postcov\HM\priorcov) + \trace(\priorcov).
\end{eqnarray*}
Let us first consider the last two terms; 
recalling that $\postcov = \prhalf \S \prhalf$, we have
\[
\begin{aligned}
- \trace(\postcov\HM\priorcov) + \trace(\priorcov)
&= -\trace(\prhalf\S\prhalf\HM\priorcov) + \trace(\priorcov)\\
&= -\trace(\priorcov\S\HMt) + \trace(\priorcov)\\
&= \trace(\priorcov\S (\S^{-1} - \HMt)) = \trace(\priorcov\S) = \trace(\postcov).
\end{aligned}
\]
Thus, by~\eqref{equ:risk}, we have,
\[
\avemu{\priorm}{\MSE(\postmean; \ipar)} =
\trace(\postcov^2 \HM) + \trace(\HM\postcov^2\HM\priorcov) - \trace(\HM\postcov\priorcov) 
+ \trace(\postcov).
\]
Hence, showing that the first three three sum to zero completes the proof. To this end, we note
that $\trace(\HM\postcov\priorcov) = \trace(\HMt \S \priorcov)$ and that
\begin{eqnarray*}
\trace(\postcov^2 \HM) + \trace(\HM\postcov^2\HM\priorcov) \!\!\!&=&\!\!\! \trace(\HMt\S \priorcov\S) + \trace(\HMt \S \priorcov\S \HMt)\\ 
&=&\!\!\! \trace\big(\HMt\S\priorcov\S (I + \HMt)\big)
= \trace(\HMt\S\priorcov).\qedhere
\end{eqnarray*}
\end{proof}

\appendix
\section{Proof of Lemma~\ref{lem:quadforms}}\label{appdx:quadforms}
Let $\{e_i\}_1^\infty$ be a complete orthonormal set in $\hilb$, 
and denote by $\Pi_n$ the orthogonal projection
of $\hilb$ onto $\span\{e_1, \ldots, e_n\}$; that is, for $x \in \hilb$, 
$\Pi_n(x) = \sum_{i = 1}^n \ip{e_i}{x} e_i$.
First note that, 
\begin{equation*}
\begin{aligned}
   \int_{\hilb} \ip{\A x}{x} =  &\int_{\hilb} \ip{\A (x-m)}{x-m} \, \mu(dx)\\ 
                             &+ \int_{\hilb} \ip{\A x}{m} \, \mu(dx)  
                             + \int_{\hilb} \ip{\A m}{x} \, \mu(dx) 
                             - \ip{\A m}{m}. 
\end{aligned}
\end{equation*}
Now by the definition of the mean of the measure, the last three terms sum to $\ip{\A m}{m}$.
Thus, the rest of the proof consists of showing $\int_{\hilb} \ip{\A (x-m)}{x-m} \, \mu(dx) = \trace(\A \Q)$.
Note that for every $x \in \hilb$, $x - m = \lim_{n \to \infty} \Pi_n(x - m)$, and thus,
\[
    \lim_{n \to \infty} \ip{\A \Pi_n(x - m)}{\Pi_n(x - m)} = \ip{\A (x-m)}{x-m}.
\]
Moreover, we note that, $|\ip{\A \Pi_n(x - m)}{\Pi_n(x - m)} | \leq \|A\| \norm{x - m}^2$, and
since $\Q$ is trace-class, the measure $\mu$ has a bounded second moment; hence, $\int_\hilb \norm{x - m}^2\, \mu(dx) < \infty$.
Therefore, we can apply Lebesgue-Dominated Convergence Theorem to get,
\begin{equation}\label{equ:dct}
 \! \!\! \lim_{n \to \infty} \int_\hilb \!\ip{\A \Pi_n(x-m)}{\Pi_n(x-m)} \, \mu(dx) \!= \!
\int_\hilb \!\ip{\A(x-m)}{x-m} \, \mu(dx).\!
\end{equation}
Next, let $\{e_i\}_1^\infty$ be the (complete) set of eigenvectors of $\Q$ with corresponding
(real) eigenvalues $\{\lambda_i\}_1^\infty$. We know that $\A \Q$ is trace-class with,
\begin{equation}\label{equ:trAQ}
    \trace(\A \Q) = \sum_i \ip{\A \Q e_i}{e_i} = \sum_i \lambda_i \ip{\A e_i}{e_i}.
\end{equation}
Also, note,
\begin{equation*}
\begin{aligned}
   \int_\hilb &\ip{\A \Pi_n(x-m)}{\Pi_n(x-m)}\, \mu(dx)\\ 
   &= 
   \sum_{i,j=1}^n \int_\hilb \ip{\A e_i}{e_j} \ip{x-m}{e_i}\ip{x-m}{e_j} \mu(dx)\\ 
   &= 
   \sum_{i,j=1}^n \ip{\A e_i}{e_j} \int_\hilb \ip{x-m}{e_i}\ip{x-m}{e_j} \mu(dx)\\ 
   &=
   \sum_{i,j=1}^n \ip{\A e_i}{e_j} \ip{\Q e_i}{e_j}
   = 
   \sum_{i=1}^n \lambda_i \ip{\A e_i}{e_i}.
\end{aligned}
\end{equation*}
Therefore, combining this last result with~\eqref{equ:dct} and~\eqref{equ:trAQ}, we get
\[
\begin{aligned}
   \int_\hilb \ip{\A (x-m)}{x-m} \, \mu(dx) &= \lim_{n \to \infty} \int_\hilb \ip{\A \Pi_n(x-m)}{\Pi_n(x-m)} \, \mu(dx)\\
                                   &= \lim_{n \to \infty} \sum_{i=1}^n \lambda_i \ip{\A e_i}{e_i} = \trace(\A \Q).~\square
\end{aligned}
\]

\bibliographystyle{abbrv}
%\bibliography{refs}

\end{document}